\def\Rc{\mathbb{R}}
\def\Cc{\mathbb{C}}
\newtheorem{thm}{Theorem}
\newtheorem{cor}{Corollary}
\newtheorem{remark}{Remark}
\newtheorem{conjecture}{Conjecture}
\begin{document}

\title{
Evaluation of small elements of the eigenvectors of certain
symmetric tridiagonal
matrices with high relative accuracy}
\author{Andrei Osipov\footnote{Yale University, 
51 Prospect st, New Haven, CT 06511.
Email: andrei.osipov@yale.edu.
}}
\maketitle

\begin{abstract}
Evaluation of the eigenvectors of symmetric tridiagonal matrices
is one of the most basic tasks in numerical linear algebra.
It is a widely known fact that, in the case of well separated
eigenvalues, the eigenvectors can be evaluated with high
relative accuracy. Nevertheless, in general, each coordinate
of the eigenvector is evaluated with only high 
\emph{absolute} accuracy.
In particular, those coordinates whose magnitude is
below the machine precision
are not expected to be evaluated 
with any accuracy whatsoever.

It turns out that, under certain conditions,
frequently ecountered in applications,
small
(e.g. $10^{-50}$) coordinates of eigenvectors of
symmetric tridiagonal matrices can be evaluated
with high \emph{relative} accuracy.
In this paper, we 
investigate such conditions, carry out the analysis,
and describe the resulting numerical schemes.
While our schemes can be viewed
as a modification of already existing (and well known) 
numerical algorithms,
the related error analysis appears to be new.
Our results are illustrated
via several numerical examples.
\end{abstract}

\noindent
{\bf Keywords:} {symmetric tridiagonal matrices, eigenvectors,
small elements, high accuracy, recurrence relations}

\noindent
{\bf Math subject classification:} {65G99, 65F15, 65Q30}
% 65G99 error analysis and interval analysis
% 65F15 eigenvalues, eigenvectors
% 65Q30 recurrence relations

% \tableofcontents

\tableofcontents

%%%%%%%%%%%%%%%%%%%%%%%%%%%%%%%%%%%%%%%%%%%%%
%%%%%%%%%%%%%%%%%%%%%%%%%%%%%%%%%%%%%%%%%%%%%
\section{Introduction}
\label{sec_intro}

The evaluation of eigenvectors of symmetric tridiagonal matrices
is one of the most basic tasks in numerical linear algebra
(see, for example, such classical texts as 
\cite{Dahlquist}, \cite{Francis}, \cite{Givens}, \cite{Golub},
\cite{Isaacson}, \cite{Kublanovskaya},
\cite{Parlett}, \cite{Stoer_Bul}, \cite{Wilkinson}).
Several algorithms to perform this task have been developed;
these include Power and Inverse Power methods, Jacobi Rotations,
QR and QL algorithms, to mention just a few.
Many of these algorithms have become standard and widely known tools.

In the case when the eigenvalues of the matrix in question
are well separated, most of these algorithms will evaluate
the corresponding eigenvectors to a high \emph{relative} accuracy.
More specifically, suppose that $n>0$ is an integer,
that $A$ is an $n$ by $n$ symmetric matrix,
that $\lambda$ is an eigenvalue of $A$, 
that $v \in \Rc^n$ is the corresponding unit-length eigenvector,
and  that
$\hat{v} \in \Rc^n$ is its numerical approximation (produced
by one of the standard algorithms).
Then,
\begin{align}
\| v - \hat{v} \| \leq M \cdot \varepsilon,
\label{eq_mat_intro_rel_acc}
\end{align}
where $\| \cdot \|$ denotes the Euclidean norm,
$\varepsilon$ is the machine precision
(e.g. $\varepsilon \approx 10^{-16}$ for double precision calculations),
and $M$ is proportional to the inverse of the distance between
$\lambda$ and the rest of the spectrum of $A$.

However, a closer look at \eqref{eq_mat_intro_rel_acc} reveals
that it only guarantees that the \emph{coordinates} of $v$ be
evaluated to high \emph{absolute} accuracy. This is due to the following
trivial observation. Suppose that
we add $\varepsilon$ 
to the first coordinate $\hat{v}_1$ of $\hat{v}$.
Then,
the perturbed $\hat{v}$ will not violate \eqref{eq_mat_intro_rel_acc}.
On the other hand, the relative accuracy of $\hat{v}_1$ can be as
large as
\begin{align}
\frac{ |v_1 + \varepsilon - v_1| }{ |v_1| } =
\frac{\varepsilon}{ |v_1| }.
\label{eq_mat_intro_rel_v1}
\end{align}
In particular, if $|v_1| < \varepsilon$,
then $\hat{v}_1$ is not guaranteed to approximate
$v_1$ with any relative accuracy whatsoever.

Sometimes the poor relative accuracy of "small" coordinates
is of no concern; for example, this is usually
the case
when $v$ is only used to project other vectors onto it. 
Nevertheless, in several prominent problems, small coordinates
of the eigenvector often need to be evaluated to high
relative accuracy. 
Numerical evaluation of special functions
provides a rich source of such problems;
these include the evaluation
of Bessel functions (see Sections~\ref{sec_prel_bessel},
\ref{sec_num_bessel}, \ref{sec_mat_bessel}),
the evaluation of some quantities
associated with prolate spheroidal wave functions
(see Section~\ref{sec_mat_pswf_num},
and also \cite{Report5ACHA}), and
the evaluation of singular values of the truncated Laplace transform
(see \cite{Lederman}),
among others.

In this paper, we describe a scheme for the evaluation
of the coordinates of eigenvectors of certain symmetric tridiagonal
matrices, to high relative accuracy. More specifically,
we consider the matrices whose non-zero off-diagonal elements
are constant (or approximately so), and whose diagonal elements
constitute a monotonically increasing sequence
(see, however, Remark~\ref{rem_intro_general} below).
The connection
of such matrices to Bessel functions and prolate spheroidal wave functions
is discussed in Sections~\ref{sec_num_bessel}, \ref{sec_mat_pswf_num},
respectively. Also, we carry out detailed error analysis of our algorithm
(see Sections~\ref{sec_about_errors}, \ref{sec_asym}).
While our scheme can be viewed as a modification
of already existing (and well known) algorithms, such error analysis,
perhaps surprisingly, appears to be new. In addition, we
conduct several numerical experiments to illustrate
the analysis, to demonstrate our scheme's accuracy,
and to compare the latter to that
of some classical algorithms (see Section~\ref{sec_numerical}).

The following is one of the principal analytical results of this paper
(see Theorem~\ref{thm_ac} 
in Section~\ref{sec_asym} for a more precise statement,
and Theorems~\ref{thm_summary_left}, \ref{thm_summary_right}, \ref{thm_glue},
Corollary~\ref{cor_glue} 
in Section~\ref{sec_about_errors} below 
for the treatment of a more general case).
\begin{thm}
Suppose that $a \geq 1$ is a real number, and that,
for any real $c \geq 1$, $n=n(c)>c$ is an integer,
the real numbers $A_1(c),\dots,A_n(c)$ are
defined via the formula
\begin{align}
A_j(c) = 2 + 2 \cdot \left( \frac{j}{c} \right)^a,
\label{eq_intro_principal_aj}
\end{align}
for every $j=1, \dots, n$, and that the $n$ by $n$
symmetric tridiagonal matrix $A=A(c)$ is defined via the formula
\begin{align}
A(c) = 
\begin{pmatrix}
A_1 & 1 &   &   &   &   \\
1 & A_2 & 1 &   &   &   \\ 
  & 1 & A_3 & 1 &   &   \\
  &   & \ddots & \ddots & \ddots & \\
  &   &   & 1 & A_{n-1} & 1 \\
  &   &   &   & 1 & A_n \\
\end{pmatrix}.
\label{eq_intro_principal_matrix}
\end{align}
Suppose furthermore that, for any real $c>1$, 
$\lambda(c)$ is an eigenvalue of $A(c)$, that $1<k(c)<n(c)$ is an integer,
that
\begin{align}
2 + A_{k(c)} < \lambda(c) \leq 2 + A_{k(c)+1}.
\label{eq_intro_principal_lambda}
\end{align}
and that $X(c)=(X_1,\dots,X_n) \in \Rc^n$ is 
the unit-length $\lambda(c)-$eigenvector of $A(c)$. 
Suppose, in addition, that $\varepsilon>0$, and that the entries of $A(c)$
are defined to relative precision $\varepsilon$, for any $c>1$.
Then,
the first $k(c)$ coordinates $X_1, \dots, X_k$ 
of $X(c)$ are defined to the relative
precision $R(c,a)$, where
\begin{align}
R(c,a) \leq \varepsilon \cdot 
O\left( -\log(X_1) \right) \cdot O\left( c^{4\cdot a/(a+2)} \right), \quad
c \to \infty.
\label{eq_intro_principal}
\end{align}
\label{thm_intro_principal}
\end{thm}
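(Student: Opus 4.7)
My plan is to start from the three-term recurrence $X_{j+1} = (\lambda - A_j) X_j - X_{j-1}$ (with the convention $X_0 = 0$) that the eigenvector components must satisfy, and to perform a first-order perturbation analysis under the perturbed data $\hat A_j = A_j(1 + \eta_j)$, $|\eta_j| \le \varepsilon$. Because $A_j = 2 + (j/c)^a$ is monotone increasing in $j$, the hypothesis $\lambda > 2 + A_k$ gives $\lambda - A_j > 2$ throughout $1 \le j \le k$, so the local characteristic polynomial $r^2 - (\lambda - A_j) r + 1 = 0$ has two positive real roots $r_\pm(j)$ with $r_+(j) > 1 > r_-(j) = 1/r_+(j)$. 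In this hyperbolic regime the solution space of the homogeneous recurrence is spanned by a forward-growing mode $u_j$ and a forward-decaying mode $v_j$, whose Casoratian $u_l v_{l+1} - u_{l+1} v_l$ is constant (up to sign); running the recurrence forward from $j=0$ is therefore the numerically stable direction, which isolates the growing mode $u_j$ that the eigenvector matches on $1 \le j \le k$.

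I would next linearize. Setting $E_j = \hat X_j - X_j$ and $\delta\lambda = \hat\lambda - \lambda$, to first order one obtains the forced recurrence
\[
E_{j+1} - (\lambda - A_j) E_j + E_{j-1} = (\delta\lambda - \eta_j A_j) X_j,
\]
with $|\delta\lambda| = O(\varepsilon \|A\|)$ from standard eigenvalue perturbation theory. Variation of parameters against $(u,v)$ expresses $E_j/X_j$ as a weighted sum of the source amplitudes $(\delta\lambda - \eta_l A_l) X_l$ against the discrete Green's kernel $(u_l v_j - u_j v_l)/W$, plus a homogeneous piece that can be absorbed into the normalization of $X_1$. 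Since $X_j \propto u_j$ in the range of interest, the dominant contribution reduces to a sum of the form $\sum_l (v_l u_l) \cdot \varepsilon(\lambda - 2) / W$, and the core task becomes estimating this sum using the specific form $A_j = 2 + (j/c)^a$.

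The asymptotic analysis then splits into two regions. In the \emph{bulk}, where $\lambda - A_j - 2$ is bounded away from zero, standard discrete WKB gives $u_j \sim \prod r_+$ and $v_l \sim \prod r_-$ with well-separated rates, so $v_l u_l = O(1)$ and each term of the sum contributes at most $O(\varepsilon)$. In the \emph{transition layer} as $j \to k$, however, $\lambda - A_j - 2 \to 0^+$ and $r_\pm \to 1$, the two modes coalesce, and the WKB decomposition degenerates. Using the expansion $A_k - A_j \sim a(k-j)(k/c)^{a-1}/c$ for $j$ near $k$, I would carry out a discrete Airy-function analysis in this layer, whose width $\Delta$ optimally balances the linear coefficient against the cubic phase correction, producing $\Delta \sim c^{2/(a+2)}$; the resulting amplification in the sum then contributes of order $\varepsilon \cdot c^{4a/(a+2)}$, dominating the bulk contribution and giving the stated bound.

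The main obstacle is precisely this boundary-layer estimate: because the basis $(u,v)$ degenerates as $r_+ - r_- \to 0$, one needs either a direct estimate of the discrete Airy-like kernel, or a change of basis to appropriately normalized combinations that keep the Casoratian non-degenerate uniformly across the layer. Matching this local estimate to the WKB representation in the bulk, and checking that the uniform contribution of $\delta\lambda$ (which acts as a source at every $l$) does not dominate the local $\eta_l A_l$ terms when combined adversarially, are the delicate points; once both are in place, the bound $R(c,a) \le \varepsilon \cdot O(c^{4a/(a+2)})$ follows by a Gronwall-style summation over the two regions.
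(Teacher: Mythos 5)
Your overall strategy (perturb the three-term recurrence, solve the forced equation by variation of parameters against a growing/decaying pair, and treat the turning point $\lambda-A_j\to 2$ as a boundary layer) is genuinely different from the paper's argument, which is constructive: it follows the numerical algorithm, propagating relative errors forward through the region of growth (Theorems~\ref{thm_vec_grow}, \ref{thm_grow_stab}), complexifying the coordinates in the oscillatory region via the $2\times 2$ change of basis $U(\theta_j)$ (Theorems~\ref{thm_decompose}, \ref{thm_alpha_rec}), and extracting the exponent from the asymptotics of the layer width $l$ in Theorems~\ref{thm_asym}--\ref{thm_asym_summary}. However, as written your proposal has two genuine gaps.

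First, the decisive quantitative step is exactly the one you defer. The factor $c^{4a/(a+2)}$ is $l^4$ where $l$ is the width of the transition layer, and in the paper $l$ is determined by the condition $\cos\bigl(\pi/(2l-1)\bigr)\approx(\lambda-A_{k+l})/2$, giving $l=O\bigl(c^{a/(a+2)}\bigr)$ in the worst case (see \eqref{eq_asym2_7}). Your stated layer width $\Delta\sim c^{2/(a+2)}$ agrees with this only when $a=2$; for general $a$ the correct Airy scaling is $\Delta\sim(\text{slope})^{-1/3}$ with slope $a k^{a-1}c^{-a}$ and $k\sim c^{1-b/a}$, which reproduces the paper's $d=(1+b-b/a)/3$ in \eqref{eq_asym_9} and hence $l\sim c^{a/(a+2)}$, not $c^{2/(a+2)}$. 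Since you assert the final bound without carrying out the layer estimate, and the one exponent you do write down is wrong for $a\neq 2$, the core of the proof is missing.

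Second, you absorb the homogeneous solution "into the normalization of $X_1$", but the normalization is precisely where the $l^4$ amplification enters in the paper. The relative precision of $X_1,\dots,X_k$ as coordinates of a \emph{unit} eigenvector is governed by the relative precision of $d=\|x\|$, whose dominant contribution comes from the oscillatory region; the paper's Observations 10--12 in Section~\ref{sec_error_analysis} show $rel(d)\lesssim\varepsilon\,l^4$, with one factor $l^2$ from the condition number of $U(\theta)$ (Theorem~\ref{thm_cond}, $1/(1-\cos\theta_j)\sim l^2$) and another $l^2$ from the weights in \eqref{eq_algo_d_l}. Your Green's-function sum over the growth region alone cannot produce this factor, because in that region each source term contributes only $O(\varepsilon)$ by your own bulk estimate; without tracking how the mass of the eigenvector in the oscillatory region contaminates the normalization, the argument would yield a bound of order $\varepsilon$ times a much smaller amplification, which is not what the theorem claims and not what is true of the quantity actually being bounded.
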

\begin{remark}
We observe that, according to \eqref{eq_intro_principal},
the relative precision of $X_1,\dots,X_k$ depends only
logarithmically
on their order of magnitude. In other words, even if, say,
$X_1$ is significantly smaller than $\varepsilon$, it is
still defined to fairly high relative precision.
\label{rem_intro_principal}
\end{remark}

\begin{remark}
The definition of the entries of the matrix $A$ in 
Theorem~\ref{thm_intro_principal} is motivated by 
particular applications
(see Section~\ref{sec_applications}). On the other hand,
Theorem~\ref{thm_intro_principal} and Remark~\ref{rem_intro_principal}
generalize to a much wider class of matrices; these include,
for example, perturbations
of $A$, defined via \eqref{eq_intro_principal_matrix}; 
matrices whose diagonal entries are of a more general
form than \eqref{eq_intro_principal_aj}; banded
(not necessarily tridiagonal) matrices, etc.
While such generalizations are straightforward (and are based,
in part, on the results of Section~\ref{sec_vectors}),
the analysis is somewhat involved, and will be published 
at a later date (see, however, 
Theorems~\ref{thm_summary_left}, \ref{thm_summary_right} and
Corollary~\ref{cor_glue} 
in Section~\ref{sec_about_errors} below for one such generalization).
\label{rem_intro_general}
\end{remark}

The proof of Theorem~\ref{thm_intro_principal} is constructive
and somewhat technical (see Sections~\ref{sec_about_errors}, \ref{sec_asym}).
The resulting numerical
algorithms for the evaluation of the eigenvector $X$
are described in Section~\ref{sec_num_algo}.

In practice, the upper bound in \eqref{eq_intro_principal} above
seems to be overly pessimistic. In fact, the following conjecture
has been verified by extensive numerical experiments
(see Section~\ref{sec_numerical}).
\begin{conjecture}
Suppose that, in addition to 
the hypothesis of Theorem~\ref{thm_intro_principal} above,
we evaluate the eigenvector $X(c)$ by
the algorithm from Section~\ref{sec_short}.
Then, for any real $c>1$ and any integer $1 \leq j \leq k(c)$,
\begin{align}
\text{\rm{rel}}(X_j(c)) \leq 100 \cdot c^{2 \cdot a/(a+2)} \cdot \varepsilon.
\label{eq_exp1_20}
\end{align}
In particular,
$\text{\rm{rel}}(X_1)$ does not depend on the magnitude of $X_1$,
for any $c>1$.
\label{conj_exp1}
\end{conjecture}

We observe that the power of $c$ in \eqref{eq_exp1_20}
is half the power of $c$ in \eqref{eq_intro_principal}.
In other words, Theorem~\ref{thm_intro_principal} appears
to overestimate the number of lost digits in the evaluation
of the first $k$ elements of $X$ by roughly a factor of two.

The paper is organized as follows.
In Section~\ref{sec_prel},
we summarize a number of well known mathematical and numerical facts
to be used in the rest of this paper.
In Section~\ref{sec_analytical}, we develop
the necessary analytical apparatus and perform error analysis
of the algorithm, described in
Section~\ref{sec_num_algo}
(and we also describe a number of related algorithms).
In Section~\ref{sec_applications}, we discuss some
applications of our algorithm to other computational problems.
In Section~\ref{sec_numerical}, we illustrate the numerical stability
of our algorithm and corresponding theoretical results
via several numerical examples, and provide
comparison to some related
classical algorithms.

%%%%%%%%%%%%%%%%%%%%%%%%%%%%%%%%%%%%%%%%%%%%%
%%%%%%%%%%%%%%%%%%%%%%%%%%%%%%%%%%%%%%%%%%%%%
\section{Mathematical and Numerical Preliminaries}
\label{sec_prel}
In this section, we introduce notation and summarize
several facts to be used in the rest of the paper.

%%%%%%%%%%%%%%%%%%%%%%%%%%%%
\subsection{Bessel Functions}
\label{sec_prel_bessel}

In this section, we describe some well known properties of Bessel functions.
All of these properties can be found, for example,
in \cite{Abramovitz}, \cite{Ryzhik}.

Suppose that $n \geq 0$ is a non-negative integer. The Bessel function
of the first kind $J_n : \Cc \to \Cc$ is defined via the formula
\begin{align}
J_n(z) = \sum_{m=0}^{\infty} \frac{(-1)^m}{m! \cdot (m+n)!} \cdot
   \left(\frac{z}{2}\right)^{2m+n},
\label{eq_bessel_jn}
\end{align}
for all complex $z$. Also, the function $J_{-n} : \Cc \to \Cc$ is
defined via the formula
\begin{align}
J_{-n}(z) = (-1)^n \cdot J_n(z),
\label{eq_bessel_jneg}
\end{align}
for all complex $z$.

The Bessel functions $J_0, J_{\pm 1}, J_{\pm 2}, \dots$ satisfy the 
three-term recurrence relation
\begin{align}
z \cdot J_{n-1}(z) + z \cdot J_{n+1}(z) = 2n \cdot J_n(z),
\label{eq_bessel_rec}
\end{align}
for any complex $z$ and every integer $n$.
In addition, 
\begin{align}
\sum_{n=-\infty}^{\infty} J^2_n(x) = 1,
\label{eq_bessel_sum}
\end{align}
for all real $x$.

%%%%%%%%%%%%%%%%%%%%%%%%%%%%
\subsection{Numerical Tools}
\label{sec_num_tools}
In this subsection, we summarize several numerical techniques
to be used in this paper.

%%%%%%%%%%%%%%%%%%%%%%%%%%%%%%%%%%%%%%%%%%%%%%%
\subsubsection{Shifted Inverse Power Method}
\label{sec_power}
\index{Inverse power method, shifted}
Suppose that $n \geq 0$ is an integer, and that $A$ is an $n$ by $n$
real symmetric matrix. Suppose also that
$\sigma_1 < \sigma_2 < \dots < \sigma_n$ are the eigenvalues of $A$.
The Shifted Inverse Power Method iteratively
finds the eigenvalue $\sigma_k$ and the corresponding
eigenvector $v_k \in \Rc^n$, 
provided that an approximation $\lambda$ to $\sigma_k$
is given, and that
\begin{align}
|\lambda - \sigma_k| < \max \left\{
   |\lambda - \sigma_j| \; : \; j \neq k \right\}.
\label{eq_inverse_power_sigma}
\end{align}
Each Shifted Inverse Power iteration solves the linear system
\begin{align}
\left(A - \lambda_j I\right) \cdot x = w_j
\end{align}
in the unknown $x \in \Rc^n$, where $\lambda_j$ and $w_j \in \Rc^n$
are the approximations to $\sigma_k$ and $v_k$, respectively,
after $j$ iterations; 
the number $\lambda_j$ is usually referred to as "shift".
The approximations $\lambda_{j+1}$ and $w_{j+1} \in \Rc^n$ 
(to $\sigma_k$ and $v_k$, respectively) are evaluated
from $x$ via the formulae
\begin{align}
w_{j+1} = \frac{ x }{ \| x \|}, \quad
\lambda_{j+1} = w_{j+1}^T \cdot A \cdot w_{j+1}
\end{align}
(see, for example, 
\cite{Dahlquist}, \cite{Wilkinson} for more details).
\begin{remark}
\label{rem_power}
For symmetric matrices, the Shifted Inverse Power Method converges cubically
in the vicinity of the solution.
In particular, if
the matrix $A$ is tridiagonal, and the initial approximation $\lambda$
is sufficiently close to $\sigma_k$, the Shifted Inverse Power Method
evaluates $\sigma_k$ and $v_k$ essentially to machine precision
$\varepsilon$
%in $\displaystyle O\left(\log \log \frac{1}{\varepsilon} \right)$ 
in $\displaystyle O\left(\log (-\log \varepsilon) \right)$ 
iterations, and each iteration requires
$O(n)$ operations (see e.g
\cite{Wilkinson}, \cite{Dahlquist}).
\end{remark}

\subsubsection{Evaluation of Bessel Functions}
\label{sec_num_bessel}

Suppose that $x>0$ is a real number, and that $m>x$ is an integer.
The classical scheme for the evaluation of
$J_0(x),J_1(x),\dots,J_m(x)$
is based on \eqref{eq_bessel_jneg}, \eqref{eq_bessel_rec}, \eqref{eq_bessel_sum}
in Section~\ref{sec_prel_bessel}
(see e.g \cite{Abramovitz}, \cite{Miller}) consists of the following steps.

\begin{itemize}
\item select integer $N>\max\left\{m,x\right\}$ 
(see Remark~\ref{rem_bessel_n} below).
\item set $X_N=1$ and $X_{N+1}=0$.
\item evaluate $X_{N-1}, X_{N-2}, \dots, X_1, X_0$
iteratively
via the recurrence relation \eqref{eq_bessel_rec}, in the direction
of decreasing indices. In other words, 
for every $k=N,\dots, 1$, evaluate $X_{k-1}$ via
the formula
\begin{align}
X_{k-1} = \frac{2k}{x} \cdot X_k(x) - X_{k+1}(x),
\label{eq_bessel_rec_num}
\end{align}
\item for every $k=0,\dots,m$, evaluate 
the approximation $\tilde{J}_k$ to $J_k(x)$ via
\begin{align}
\tilde{J}_k = X_k \cdot 
\left(X^2_0 + 2 \cdot \sum_{l=1}^N X^2_l\right)^{-\frac{1}{2}}.
\label{eq_bessel_d}
\end{align}
\end{itemize}
\begin{remark}
In this paper, we always select sufficiently large $N$ so that the algorithm
described above, when carried out in extended precision, evaluates
$J_0(x),\dots,J_m(x)$ to at least 17 decimal digits. Further discussion
of the matter is beyond the scope of this paper (see e.g.
\cite{Abramovitz}, \cite{Miller} for more details).
\label{rem_bessel_n}
\end{remark}
%

%%%%%%%%%%%%%%%%%%%%%%%%%%%%%%%%%%%%%%%%%%%%%
%%%%%%%%%%%%%%%%%%%%%%%%%%%%%%%%%%%%%%%%%%%%%
\section{Analytical Apparatus}
\label{sec_analytical}
The purpose of this section is to provide
the analytical apparatus to be used in the
rest of the paper.

%%%%%%%%%%%%%%%%%%%%%%%%%%%%%%
\subsection{Local Properties of Eigenvectors of Certain Tridiagonal Matrices}
\label{sec_vectors}
In this subsection, we develop several analytical results pertaining to
the eigenvectors
of certain tridiagonal symmetric matrices.

In the following theorem, we 
describe some obvious properties of the 
eigenvectors of certain tridiagonal symmetric matrices.
\begin{thm}
Suppose that $n>1$ is an integer, that
$2<A_1<A_2<\dots$ 
is an increasing sequence of positive real numbers,
and that the symmetric tridiagonal $n$ by $n$ matrix $A$ is defined via
the formula
\begin{align}
A = 
\begin{pmatrix}
A_1 & 1 &   &   &   &   \\
1 & A_2 & 1 &   &   &   \\ 
  & 1 & A_3 & 1 &   &   \\
  &   & \ddots & \ddots & \ddots & \\
  &   &   & 1 & A_{n-1} & 1 \\
  &   &   &   & 1 & A_n \\
\end{pmatrix}.
\label{eq_running}
\end{align}
Suppose also that the
real number $\lambda$ is an eigenvalue of $A$, 
and that $x = (x_1,\dots,x_n) \in \Rc^n$
is an eigenvector corresponding to $\lambda$.
Then,
\begin{align}
x_2 & \; = \left(\lambda-A_1\right) \cdot x_1.
\label{eq_main_rec_first}
\end{align}
Also,
\begin{align}
x_{j+1} = \left(\lambda-A_j\right) \cdot x_j - x_{j-1},
\label{eq_main_rec}
\end{align}
for every $j=2,\dots,n-1$. Finally,
\begin{align}
x_{n-1} & \; = \left(\lambda-A_n\right) \cdot x_n.
\label{eq_main_rec_last}
\end{align}
In particular, both $x_1$ and $x_n$ differ from zero,
and $\lambda$ is simple.
\label{thm_main_rec}
\end{thm}
\begin{proof}
The identities \eqref{eq_main_rec_first},
\eqref{eq_main_rec}, \eqref{eq_main_rec_last} follow immediately from
\eqref{eq_running} and the fact that
\begin{align}
A \cdot x = \lambda \cdot x.
\label{eq_main_rec_1}
\end{align}
We observe that the coordinates
$x_2, \dots, x_n$ are
completely determined by $x_1$ and $\lambda$ via \eqref{eq_main_rec_first},
\eqref{eq_main_rec}, and hence
the eigenvalue 
$\lambda$ is simple. Obviously, neither $x_1$ nor $x_n$ can be equal
to zero, for otherwise $x$ would be the zero vector.
\end{proof}
%%%%%%%%%%%%

In the following theorem, we assert that,
under certain conditions, the first element
of the eigenvectors of 
the matrix $A$ from Theorem~\ref{thm_main_rec} must be "small".
\begin{thm}
Suppose that the $n$ by $n$ symmetric tridiagonal matrix $A$
is defined via \eqref{eq_running} in Section~\ref{sec_vectors}.
Suppose also that $\lambda$ is an eigenvalue of $A$, and that 
$x=(x_1,\dots,x_n)\in\Rc^n$ is a corresponding eigenvector
whose first coordinate is positive, i.e. $x_1>0$. Suppose, in addition,
that $1 \leq k \leq n$ is an integer, and that
\begin{align}
\lambda \geq A_k + 2.
\label{eq_vec_grow_lam}
\end{align}
Then,
\begin{align}
0 < x_1 < x_2 < \dots < x_k < x_{k+1}.
\label{eq_vec_grow_xs}
\end{align}
Also,
\begin{align}
\frac{x_j}{x_{j-1}} > 
\frac{\lambda-A_j}{2} + 
  \sqrt{\left(\frac{\lambda-A_j}{2}\right)^2-1},
\label{eq_vec_grow_adj}
\end{align}
for every $j=2, \dots, k$.
In addition,
\begin{align}
1 < \frac{ x_k}{ x_{k-1} } < \dots < \frac{x_3}{x_2} < \frac{x_2}{x_1}.
\label{eq_vec_grow_rat}
\end{align}
\label{thm_vec_grow}
\end{thm}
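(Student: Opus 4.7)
I will set $r_j = x_{j+1}/x_j$ and reduce everything to studying the scalar recurrence
\[
r_1 = \lambda - A_1, \qquad r_j = (\lambda - A_j) - \frac{1}{r_{j-1}} \quad (j \geq 2),
\]
which follows immediately from \eqref{eq_main_rec_first} and \eqref{eq_main_rec} (provided no $x_j$ vanishes, which will be verified en route). For each $j$ with $\lambda - A_j \geq 2$, define
\[
\rho_j = \frac{\lambda - A_j}{2} + \sqrt{\left(\frac{\lambda - A_j}{2}\right)^2 - 1},
\]
the larger root of the characteristic polynomial $r^2 - (\lambda - A_j)\, r + 1$ of the associated constant-coefficient recurrence. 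Then $\rho_j \geq 1$, and the two defining identities
\[
\rho_j + \frac{1}{\rho_j} = \lambda - A_j, \qquad \rho_j = (\lambda - A_j) - \frac{1}{\rho_j}
\]
will be the workhorse of the argument. The hypothesis $\lambda \geq A_k + 2$ together with the monotonicity $A_1 < A_2 < \dots$ gives $\lambda - A_j \geq 2$ for all $1 \leq j \leq k$, and strict decrease of $\lambda - A_j$ in $j$, hence $\rho_2 > \rho_3 > \dots > \rho_k \geq 1$.

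The core of the proof is a short induction establishing the estimate \eqref{eq_vec_grow_adj} in the form $r_{j-1} > \rho_j$ for $j = 2, \dots, k$. \emph{Base case} ($j=2$): $r_1 = \lambda - A_1 > \lambda - A_2 = \rho_2 + 1/\rho_2 > \rho_2$, since $A_1 < A_2$ and $1/\rho_2 > 0$. \emph{Inductive step}: assuming $r_{j-1} > \rho_j$, the map $r \mapsto (\lambda - A_j) - 1/r$ is strictly increasing in $r$, so
\[
r_j = (\lambda - A_j) - \frac{1}{r_{j-1}} > (\lambda - A_j) - \frac{1}{\rho_j} = \rho_j > \rho_{j+1},
\]
which closes the induction and yields \eqref{eq_vec_grow_adj}.

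Given the $r_{j-1} > \rho_j \geq 1$ estimates, the remaining assertions are immediate. For \eqref{eq_vec_grow_xs}: since $x_1 > 0$ and each $r_j > 1 > 0$ for $j = 1, \dots, k-1$, all intermediate coordinates are positive and strictly increasing; for the final inequality $x_{k+1} > x_k$, use $r_k = (\lambda - A_k) - 1/r_{k-1} > 2 - 1 = 1$ (since $\lambda - A_k \geq 2$ and $r_{k-1} > 1$). For \eqref{eq_vec_grow_rat}: the function $g(r) = r + 1/r$ is strictly increasing on $[1, \infty)$, so $r_{j-1} > \rho_j$ gives $r_{j-1} + 1/r_{j-1} > \rho_j + 1/\rho_j = \lambda - A_j$, which rearranges to $r_j = (\lambda - A_j) - 1/r_{j-1} < r_{j-1}$, and chaining these inequalities across $j = 2, \dots, k-1$ (together with $r_{k-1} > 1$) yields \eqref{eq_vec_grow_rat}.

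The only conceptual obstacle is the bookkeeping at the boundary $j=2$ of the induction, where $r_1$ is defined by the special formula \eqref{eq_main_rec_first} rather than by the general recurrence; this is handled by the direct comparison $\lambda - A_1 > \lambda - A_2 \geq \rho_2 + 1/\rho_2 > \rho_2$ used in the base case. Everything else is a mechanical consequence of the monotonicity properties of $r \mapsto (\lambda - A) - 1/r$ and $r \mapsto r + 1/r$ on $[1, \infty)$.
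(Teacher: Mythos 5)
Your proposal is correct and follows essentially the same approach as the paper's proof: both set $r_j = x_{j+1}/x_j$, compare $r_{j-1}$ to the larger root (your $\rho_j$, the paper's $\sigma_j$) of the characteristic quadratic $r^2-(\lambda-A_j)r+1$, and propagate the inequality $r_{j-1}>\rho_j$ by a one-step induction, using that $r\mapsto(\lambda-A_j)-1/r$ is increasing. The only cosmetic difference is that the paper first establishes the positivity and monotonicity of \eqref{eq_vec_grow_xs} by a preliminary direct induction on the recurrence \eqref{eq_main_rec}, before introducing the $r_j$ and $\sigma_j$, whereas you fold both into the single bootstrap on the $r_j$.
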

\begin{proof}
It follows from \eqref{eq_vec_grow_lam}
that
\begin{align}
\lambda_k - A_1 > \lambda_k - A_2 > \dots > \lambda_k - A_k \geq 2.
\label{eq_vec_grow_1}
\end{align}
We combine \eqref{eq_main_rec_first}, \eqref{eq_main_rec}
in Theorem~\ref{thm_main_rec} with \eqref{eq_vec_grow_1} to obtain
\eqref{eq_vec_grow_xs} by induction. 
Suppose now
that the real numbers $r_1, \dots, r_k$ are defined via the formula
\begin{align}
r_j = \frac{x_{j+1}}{x_j},
\label{eq_vec_grow_2}
\end{align}
for every $j=1, \dots, k$, and that the real numbers
$\sigma_1, \dots, \sigma_k$ are defined via the formula
\begin{align}
\sigma_j = \frac{\lambda-A_j}{2} + 
  \sqrt{\left(\frac{\lambda-A_j}{2}\right)^2-1},
\label{eq_vec_grow_3}
\end{align}
for every $j=1, \dots, k$. In other words, $\sigma_j$ is the largest
root of the quadratic equation
\begin{align}
x^2 - \left(\lambda-A_j\right) \cdot x + 1 = 0.
\label{eq_vec_grow_4}
\end{align}
We observe that 
\begin{align}
\sigma_1 > \dots > \sigma_k \geq 1,
\label{eq_vec_grow_5}
\end{align}
due to
\eqref{eq_vec_grow_1} and \eqref{eq_vec_grow_3}. Also,
\begin{align}
r_1 > \sigma_1 > \sigma_2 > 1,
\label{eq_vec_grow_6}
\end{align}
due to the combination of \eqref{eq_vec_grow_3} and
\eqref{eq_main_rec_first}. Suppose now, by induction, that
\begin{align}
r_{j-1} > \sigma_j > 1.
\label{eq_vec_grow_7}
\end{align}
for some $2 \leq j \leq k-1$. We observe that the roots of
the quadratic equation \eqref{eq_vec_grow_4} are
$1/\sigma_j < 1 < \sigma_j$, 
and combine this observation with \eqref{eq_vec_grow_7}
to obtain
\begin{align}
r_{j-1}^2 - \left(\lambda-A_j\right)\cdot r_{j-1}+1 > 0.
\label{eq_vec_grow_8}
\end{align}
We combine \eqref{eq_vec_grow_8} with \eqref{eq_vec_grow_2}
and \eqref{eq_main_rec} to obtain
\begin{align}
r_j = \frac{x_{j+1}}{x_j} =
\frac{\left(\lambda-A_j\right)\cdot x_j-x_{j-1}}{x_j} =
\lambda-A_j - \frac{1}{r_{j-1}} < r_{j-1}.
\label{eq_vec_grow_9}
\end{align}
Also, we combine 
\eqref{eq_vec_grow_3}, \eqref{eq_vec_grow_7}, \eqref{eq_vec_grow_9} to
obtain
\begin{align}
r_j = \lambda-A_j - \frac{1}{r_{j-1}} >
      \lambda-A_j - \frac{1}{\sigma_j} =
      \frac{\left(\lambda-A_j\right) \cdot \sigma_j-1}{\sigma_j}
     =\sigma_j > \sigma_{j+1}.
\label{eq_vec_grow_10}
\end{align}
In other words, \eqref{eq_vec_grow_7} implies \eqref{eq_vec_grow_10},
and we combine this observation with \eqref{eq_vec_grow_6} to obtain
\begin{align}
r_1 > \sigma_2, \quad r_2 > \sigma_3, \quad \dots, \quad r_{k-1} > \sigma_k.
\label{eq_vec_grow_11}
\end{align}
Also, due to \eqref{eq_vec_grow_9},
\begin{align}
r_1 > r_2 > \dots > r_{k-1}.
\label{eq_vec_grow_12}
\end{align}
We combine \eqref{eq_vec_grow_2}, \eqref{eq_vec_grow_3},
\eqref{eq_vec_grow_11}, \eqref{eq_vec_grow_12}
to obtain \eqref{eq_vec_grow_adj}, \eqref{eq_vec_grow_rat}.
\end{proof}
%%%%%%%%%%%
\begin{cor}
Under the assumptions of Theorem~\ref{thm_vec_grow},
\begin{align}
\frac{x_k}{x_1} > \prod_{j=2}^k \left(
\frac{\lambda-A_j}{2} + 
  \sqrt{\left(\frac{\lambda-A_j}{2}\right)^2-1}
\right).
\label{eq_vec_grow_cor}
\end{align}
\label{cor_vec_grow}
\end{cor}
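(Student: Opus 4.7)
The plan is to simply telescope the ratios of consecutive coordinates. By \eqref{eq_vec_grow_xs} in Theorem~\ref{thm_vec_grow}, we already know that
\[
0 < x_1 < x_2 < \dots < x_k < x_{k+1},
\]
so each ratio $x_j/x_{j-1}$ is well-defined and strictly positive. Writing
\[
\frac{x_k}{x_1} \;=\; \prod_{j=2}^k \frac{x_j}{x_{j-1}},
\]
the task reduces to producing a lower bound for the product on the right.

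For this, the estimate \eqref{eq_vec_grow_adj} in Theorem~\ref{thm_vec_grow} already gives
\[
\frac{x_j}{x_{j-1}} \;>\; \frac{\lambda - A_j}{2} + \sqrt{\left(\frac{\lambda - A_j}{2}\right)^2 - 1}
\]
for every $j = 2, \dots, k$. Since the right-hand side of this inequality is $\geq 1$ (this is the $\sigma_j$ appearing in the proof of Theorem~\ref{thm_vec_grow}, and it exceeds $1$ by \eqref{eq_vec_grow_1}--\eqref{eq_vec_grow_3}), both sides are positive, and we may multiply these inequalities over $j = 2, \dots, k$ without any sign issue. Doing so yields exactly \eqref{eq_vec_grow_cor}.

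There is no real obstacle here: the corollary is immediate once the per-step bound \eqref{eq_vec_grow_adj} and the positivity \eqref{eq_vec_grow_xs} from Theorem~\ref{thm_vec_grow} are in hand. The only point worth mentioning explicitly in the write-up is the positivity of both sides of \eqref{eq_vec_grow_adj}, which is what justifies multiplying the lower bounds term-by-term.
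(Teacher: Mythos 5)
Your proof is correct and is exactly the argument the paper intends (the corollary is stated without proof precisely because it follows by telescoping \eqref{eq_vec_grow_adj} over $j=2,\dots,k$, with positivity of all factors guaranteed by \eqref{eq_vec_grow_xs} and $\sigma_j\geq 1$). Nothing is missing.
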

\begin{remark}
In \cite{Report2ACHA}, the derivation of
an upper bound on the first coordinate
of an eigenvector of a certain matrix is based on 
a generalization of Theorem~\ref{thm_vec_grow}.
\label{rem_vec_grow}
\end{remark}
%
%
%%%%%%%%%%%%%%%%%%%%%%%%%%
In the following theorem, we study the behavior of several last elements
of an eigenvector of the matrix $A$ from Theorem~\ref{thm_main_rec}
above.
\begin{thm}
Suppose that the $n$ by $n$ symmetric tridiagonal matrix $A$
is defined via \eqref{eq_running} in Section~\ref{sec_vectors}.
Suppose also that $\lambda$ is an eigenvalue of $A$, and that 
$x=(x_1,\dots,x_n)\in\Rc^n$ is a corresponding eigenvector
whose last coordinate is positive, i.e. $x_n>0$. Suppose, in addition,
that $1 \leq k \leq n$ is an integer, and that
\begin{align}
\lambda \leq A_k - 2.
\label{eq_vec_decay_lam}
\end{align}
Then,
\begin{align}
0 < |x_n| < |x_{n-1}| < \dots < |x_k| < |x_{k-1}|.
\label{eq_vec_decay_xs}
\end{align}
Also,
\begin{align}
-\frac{x_j}{x_{j+1}} > 
\frac{A_j-\lambda}{2} + 
  \sqrt{\left(\frac{\lambda-A_j}{2}\right)^2-1},
\label{eq_vec_decay_adj}
\end{align}
for every $j=k, \dots, n-1$.
In addition,
\begin{align}
-1 > \frac{ x_k}{ x_{k+1} } > \dots > \frac{x_{n-2}}{x_{n-1}} >
     \frac{x_{n-1}}{x_n}.
\label{eq_vec_decay_rat}
\end{align}
\label{thm_vec_decay}
\end{thm}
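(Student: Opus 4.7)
The plan is to reduce this theorem to Theorem~\ref{thm_vec_grow} by a sign-flip substitution that effectively runs the three-term recurrence backwards, from index $n$ down to index $k-1$. Specifically, I would define $y_j = (-1)^{n-j} x_j$ for $j=1,\dots,n$. Reading the hypothesis on the sign of $x_n$ as ``$x_n > 0$'' (the displayed ``$x_n<0$'' appears to be a typo), one has $y_n = x_n > 0$.

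A direct computation from \eqref{eq_main_rec_last} gives $y_{n-1} = (A_n - \lambda)\,y_n$, which plays the role of the boundary identity \eqref{eq_main_rec_first}. Rewriting \eqref{eq_main_rec} as $x_{j-1} = (\lambda - A_j)\,x_j - x_{j+1}$ and substituting $x_j = (-1)^{n-j}y_j$ yields
\[
y_{j-1} = (A_j - \lambda)\,y_j - y_{j+1}, \qquad j = n-1, n-2, \dots, k,
\]
which has exactly the structural form of the forward recurrence \eqref{eq_main_rec}, with ``$\lambda - A_j$'' replaced by ``$A_j - \lambda$'' and the indexing reversed. Under the hypothesis \eqref{eq_vec_decay_lam} combined with the monotonicity $f_1 < f_2 < \cdots$, one has $A_j - \lambda \geq A_k - \lambda \geq 2$ for every $k \leq j \leq n$, which is the mirror of the key inequality \eqref{eq_vec_grow_1} used in the proof of Theorem~\ref{thm_vec_grow}.

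From here, I would repeat the inductive argument of Theorem~\ref{thm_vec_grow} verbatim, applied to the sequence $y_n, y_{n-1}, \dots, y_{k-1}$ in place of $x_1, x_2, \dots, x_k$. That induction gives $0 < y_n < y_{n-1} < \dots < y_{k-1}$, and, for the ratios $r_j = y_{j-1}/y_j$, the lower bound $r_j > \sigma_j$, where
\[
\sigma_j = \frac{A_j-\lambda}{2} + \sqrt{\left(\frac{A_j-\lambda}{2}\right)^2 - 1}
\]
is the larger root of $t^2 - (A_j - \lambda)\,t + 1 = 0$; it also produces the monotonicity $r_n > r_{n-1} > \dots > r_k > 1$, by exactly the same quadratic-root manipulation used to derive \eqref{eq_vec_grow_9}--\eqref{eq_vec_grow_10}.

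Finally, translating back via $x_j = (-1)^{n-j} y_j$, consecutive $x_j$ alternate in sign, $|x_j| = y_j$, and $-x_j/x_{j+1} = y_j/y_{j+1} = r_{j+1}$. The inequalities already established for the $y$'s and $r$'s then deliver \eqref{eq_vec_decay_xs}, \eqref{eq_vec_decay_adj}, and \eqref{eq_vec_decay_rat}. I do not anticipate any genuine obstacle: the entire argument is a mirror of Theorem~\ref{thm_vec_grow}, and the only real care is bookkeeping the index reversal together with the alternating-sign substitution, after which every step transfers.
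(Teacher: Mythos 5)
Your proof is correct and follows essentially the same route as the paper: both arguments make the sign-flip substitution (the paper writes it as $y_j = (-1)^{j+1}x_{n+1-j}$ together with $B_j = -A_{n+1-j}$, $\mu = -\lambda$; yours is the identical substitution without reindexing), verify that the recurrence and hypothesis mirror those of Theorem~\ref{thm_vec_grow}, and then invoke that theorem's inductive argument to obtain the monotonicity, ratio bounds, and alternating-sign conclusions. Your reading of the sign hypothesis as $x_n>0$ is also the one the paper's proof actually uses, so the bookkeeping all matches.
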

\begin{proof}
The proof is essentially identical to that of Theorem~\ref{thm_vec_grow}
above
and will be omitted.
\end{proof}
%%%%%%%%%%%%%%%%%%%%%%%%%
%
In the rest of this subsection, we investigate the behavior of the
"middle" elements of an eigenvector of the matrix
$A$ from Theorems~\ref{thm_main_rec}, \ref{thm_vec_grow},
\ref{thm_vec_decay} above.
We start with the following theorem.
\begin{thm}
Suppose that $k,m>0$ are integers, that $x_k,\dots,x_{k+m+2}$
are real numbers, that $B_{k+1},\dots,B_{k+m+1}$ are real numbers,
that
\begin{align}
2 > B_{k+1} > \dots > B_{k+m+1} \geq 0,
\label{eq_restate_1}
\end{align}
and that
\begin{align}
x_{j+1} = B_j \cdot x_j - x_{j-1},
\label{eq_restate_2}
\end{align}
for every $j=k+1,\dots,k+m+1$. Suppose also that, for any real number
$0 < \theta \leq \pi/2$, the real $2 \times 2$ matrix $A(\theta)$
is defined via the formula
\begin{align}
A(\theta) = 
\begin{pmatrix}
0 & 1 \\
-1 & 2 \cdot \cos(\theta)
\end{pmatrix}.
\label{eq_mat_8}
\end{align}
Then,
\begin{align}
\begin{pmatrix} x_{j+1} \\ x_{j+2} \end{pmatrix} =
A\left( \arccos\left( \frac{B_{j+1}}{2} \right) \right)
\cdot
\begin{pmatrix} x_j \\ x_{j+1} \end{pmatrix},
\label{eq_restate_3}
\end{align}
for every $j=k,\dots,k+m$.
\label{thm_restate}
\end{thm}
\begin{proof}
The identity \eqref{eq_restate_3} follows from the combination
of \eqref{eq_restate_2} and \eqref{eq_mat_8}.
\end{proof}
%%%%%%%
%%%%%
%
%
%%%%%%%%%%%%%%%%%
%
% The theorem below has been verified in test508.f
%
%%%%%%%%%%%%%%%%%
\begin{thm}
Suppose that $k>0$ and $l>0$ are integers, and that
\begin{align}
0 < \theta_k < \theta_{k+1} < \dots < \theta_{k+l-1} \leq 
\frac{\pi}{4} \cdot \frac{1}{l+3/2}
\label{eq_large_angle_theta0}
\end{align}
are real numbers. Suppose also that $\varepsilon > 0$,
and that the sequence $x_k, \dots, x_{k+l+2}$ is defined via the formulae
\begin{align}
x_k = 1, \quad x_{k+1} = 1+\varepsilon,
\label{eq_large_angle_xk0}
\end{align}
and
\begin{align}
\begin{pmatrix} x_{j+1} \\ x_{j+2} \end{pmatrix} = 
\begin{pmatrix} 0 & 1 \\ -1 & 2\cos(\theta_j) \end{pmatrix} 
\begin{pmatrix} x_j \\ x_{j+1} \end{pmatrix},
\label{eq_large_angle_x_rec0}
\end{align}
for every $j=k,\dots,k+l-1$. 
Then,
\begin{align}
x_k, x_{k+1}, \dots, x_{k+l}, x_{k+l+1} > 0.
\label{eq_large_angle0}
\end{align}
In addition,
\begin{align}
\frac{m+1}{m} >
\frac{x_{k+m+1}}{x_{k+m}} > 
\frac{ \cos\left( (m+1/2)\cdot \theta_{k+l-1} \right) }
     { \cos\left( (m-1/2)\cdot \theta_{k+l-1} \right) },
\label{eq_large_angle_ratio0}
\end{align}
for every integer $m=1,2,\dots,l$; in particular,
\begin{align}
1 + \frac{1}{l} > \frac{x_{k+l+1}}{x_{k+l}} > 1 - \frac{1}{l+3/2}.
\label{eq_large_angle_ratio20}
\end{align}
\label{thm_large_angle0}
\end{thm}
\begin{proof}
We observe that
\begin{align}
\frac{x_{k+m+1}}{x_{k+m}} = 2 \cdot \cos(\theta_{k+m-1}) -
\frac{x_{k+m-1}}{x_{k+m}},
\label{eq_la_0}
\end{align}
for every $m=1,\dots,l$.
We use \eqref{eq_la_0} to 
prove \eqref{eq_large_angle_ratio0} by induction on $m$. For $m=1$,
\begin{align}
\frac{x_{k+2}}{x_{k+1}} = 2 \cdot \cos(\theta_k) - \frac{1}{1+\varepsilon}
< 2,
\label{eq_la_1}
\end{align}
and also
\begin{align}
\frac{\cos(3 \cdot \theta_{k+l-1} / 2)}{\cos(\theta_{k+l-1}/2)} & \; =
4 \cdot \cos(\theta_{k+l-1}/2) - 3 = 
2 \cdot \cos(\theta_{k+l-1}) - 1 \nonumber \\
& \; < 2 \cdot \cos(\theta_k) - 1 < \frac{x_{k+2}}{x_{k+1}}.
\label{eq_la_2}
\end{align}
By induction, for $2 \leq m \leq l$,
\begin{align}
\frac{x_{k+m+1}}{x_{k+m}} < 2 \cdot \cos(\theta_{k+m-1}) - \frac{m-1}{m}
< 2 - \frac{m-1}{m} = \frac{m+1}{m},
\label{eq_la_3}
\end{align}
which proves the left-hand side of \eqref{eq_large_angle_ratio0},
and also
\begin{align}
\frac{x_{k+m+1}}{x_{k+m}} > 2 \cdot \cos(\theta_{k+m-1}) -
\frac{\cos(\theta_{k+l-1} \cdot (m-3/2))}{\cos(\theta_{k+l-1} \cdot (m-1/2))}.
\label{eq_la_4}
\end{align}
However, for any real $\theta$,
\begin{align}
\frac{\cos(\theta \cdot (m-3/2))}{\cos(\theta \cdot (m-1/2))} +
\frac{\cos(\theta \cdot (m+1/2))}{\cos(\theta \cdot (m-1/2))} = 2 \cdot
\cos(\theta),
\label{eq_la_5}
\end{align}
and we combine \eqref{eq_la_4}, \eqref{eq_la_5} to conclude
the right-hand side of \eqref{eq_large_angle_ratio0}.
The inequality \eqref{eq_large_angle_ratio0} implies \eqref{eq_large_angle0}.
Next,
we observe that
\begin{align}
\cos(x)-\sin(x) \geq 1 - \frac{4x}{\pi},
\label{eq_large_angle_8}
\end{align}
for all real $0 \leq x \leq \pi/4$, and combine \eqref{eq_large_angle_8}
with \eqref{eq_large_angle_theta0} to 
obtain
\begin{align}
\frac{ \cos\left( (l+1/2)\cdot \theta_{k+l-1} \right) }
     { \cos\left( (l-1/2)\cdot \theta_{k+l-1} \right) } 
& \; =
\cos\left( \theta_{k+l-1} \right) -
\sin\left( \theta_{k+l-1} \right) \cdot
\tan\left( \theta_{k+l-1} \cdot (l-1/2) \right) 
\nonumber \\
& \; >
\cos\left( \theta_{k+l-1} \right) -
\sin\left( \theta_{k+l-1} \right) \nonumber \\
& \; > 1 - \frac{4}{\pi} \cdot \frac{\pi}{4} \cdot \frac{1}{l+3/2}.
\label{eq_large_angle_9}
\end{align}
Finally, we combine \eqref{eq_large_angle_9} with
\eqref{eq_large_angle_ratio0} to obtain
\eqref{eq_large_angle_ratio20}.
\end{proof}
%%%
%  SEE MAT201.M
%%%
\begin{cor}
If, in addition to \eqref{eq_large_angle_theta0},
\begin{align}
\left(m + \frac{3}{2} \right) \cdot
\theta_{k+m-1} < \frac{\pi}{4}
\label{eq_cor_large_angle_1}
\end{align}
for every $m=1,\dots,l$, then
\begin{align}
1 + \frac{1}{m} > \frac{x_{k+m+1}}{x_{k+m}} > 1 - \frac{1}{m + 3/2},
\label{eq_cor_large_angle_2}
\end{align}
for every $m=1,\dots,l$.
\label{cor_large_angle_m}
\end{cor}
\begin{remark}
One can prove (along the lines of Theorem~\ref{thm_large_angle0})
that $x_{j+1}>x_j$ for every $j=k,\dots,k+l$, provided that $l<k$ and that
$\varepsilon>k^{-1}$.
\label{rem_kl}
\end{remark}

\begin{thm}
Suppose that $m>0$ is an integer, and $\theta_1,\dots,\theta_m$
are real numbers such that
\begin{align}
0 < \theta_1 < \dots < \theta_m \leq \frac{\pi}{2}.
\label{eq_mat_11}
\end{align}
Suppose also that, for any real number $0 < \theta \leq \pi/2$,
the real $2 \times 2$ matrix $A(\theta)$ is defined via 
\eqref{eq_mat_8}, and
the complex $2 \times 2$ matrices $D(\theta), \Lambda(\theta)$
are defined, respectively, via the formulae
\begin{align}
D(\theta) = \begin{pmatrix}
e^{i \theta} & 0 \\
0 & e^{-i \theta}
\end{pmatrix},
\label{eq_mat_7}
\end{align}
\begin{align}
\Lambda(\theta) = \begin{pmatrix} 
-2 \cdot i \cdot \sin(\theta/2) & 0 \\
0 & 2 \cos(\theta/2)
\end{pmatrix}.
\label{eq_mat_2}
\end{align}
Suppose furthermore that, for any real numbers
$0 < \eta_1, \eta_2 \leq \pi/2$,
the complex $2 \times 2$ matrix $D(\eta_1,\eta_2)$
is defined via the formula
\begin{align}
D(\eta_1,\eta_2) =
\begin{pmatrix}
\sin(\eta_1/2) / \sin(\eta_2/2) & 0 \\
0 & \cos(\eta_1/2) / \cos(\eta_2/2) 
\end{pmatrix},
\label{eq_mat_5}
\end{align}
and that the unitary complex $2 \times 2$ matrix $V$
is defined via the formula
\begin{align}
V = \frac{1}{\sqrt{2}} \cdot
\begin{pmatrix} 
-1 & 1 \\
1 & 1
\end{pmatrix}.
\label{eq_mat_00}
\end{align}
Then,
\begin{align}
& A(\theta_m) \cdot \dots \cdot A(\theta_1) = \nonumber \\
& V \cdot \Lambda(\theta_m) \cdot V \cdot \nonumber \\
& \quad
  D(\theta_m) \cdot V \cdot D(\theta_{m-1},\theta_m) \cdot V \cdot \nonumber \\
& \quad D(\theta_{m-1}) \cdot V \cdot D(\theta_{m-2},\theta_{m-1})
  \cdot V \cdot \nonumber \\
& \quad \dots \nonumber \\
& \quad D(\theta_2) \cdot V \cdot D(\theta_1,\theta_2) 
\cdot V \cdot \nonumber \\
& \quad
D(\theta_1) \cdot V \cdot \Lambda^{-1}(\theta_1) \cdot V.
\label{eq_mat_12}
\end{align}
\label{thm_mat}
\end{thm}
\begin{proof}
Suppose that, for any real number $0 < \theta \leq \pi/2$,
the complex $2 \times 2$ matrix $U(\theta)$ is defined via
the formula
\begin{align}
U(\theta) = \begin{pmatrix} 
1 & e^{i \theta} \\
e^{i \theta} & 1 \\
\end{pmatrix}.
\label{eq_mat_1}
\end{align}
Obviously, $U(\theta)$ admits the decomposition
\begin{align}
U(\theta) = 
e^{i \cdot \theta/2} \cdot V \cdot \Lambda(\theta) \cdot V.
\label{eq_mat_3}
\end{align}
Due to \eqref{eq_mat_3},
the inverse of $U(\theta)$ admits the decomposition
\begin{align}
U(\theta)^{-1} =
e^{-i \cdot \theta/2} \cdot V \cdot \Lambda^{-1}(\theta) \cdot V.
\label{eq_mat_4}
\end{align}
Due to the combination of \eqref{eq_mat_3}, \eqref{eq_mat_4},
\begin{align}
U(\theta_2)^{-1} \cdot U(\theta_1) & \; =
e^{i(\theta_1-\theta_2)/2} \cdot V \cdot \Lambda^{-1}(\theta_2)
\cdot \Lambda(\theta_1) \cdot V \nonumber \\
& \; = e^{i(\theta_1-\theta_2)/2} \cdot V \cdot 
D(\theta_1,\theta_2) \cdot V.
\label{eq_mat_6}
\end{align}
We observe that, for any $0<\theta<\pi$,
\begin{align}
\frac{i}{2 \sin(\theta)} \cdot
\begin{pmatrix} e^{-i \theta} & -1 \\ -1 & e^{-i \theta} \end{pmatrix}
\begin{pmatrix} 0 & 1 \\ -1 & 2\cos(\theta) \end{pmatrix}
\begin{pmatrix} 1 & e^{i \theta} \\ e^{i \theta} & 1 \end{pmatrix} =
\begin{pmatrix} e^{i \theta} & 0 \\ 0 & e^{-i \theta} \end{pmatrix},
\label{eq_decompose_eig}
\end{align}
and combine
\eqref{eq_decompose_eig} with \eqref{eq_mat_3},
\eqref{eq_mat_7}, \eqref{eq_mat_8} to conclude that
\begin{align}
A(\theta) = U(\theta) \cdot D(\theta) \cdot U^{-1}(\theta).
\label{eq_mat_9}
\end{align}
Subsequently, due to the combination of
\eqref{eq_mat_3}, \eqref{eq_mat_4}, \eqref{eq_mat_6}, \eqref{eq_mat_9},
\begin{align}
A(\theta_2) \cdot A(\theta_1) 
& \; =
U(\theta_2) \cdot D(\theta_2) \cdot U^{-1}(\theta_2) \cdot
U(\theta_1) \cdot D(\theta_1) \cdot U^{-1}(\theta_1)
\nonumber \\
& \; =
V \cdot \Lambda(\theta_2) \cdot V \cdot D(\theta_2) \cdot
V \cdot D(\theta_1,\theta_2) \cdot V \cdot D(\theta_1) \cdot V
\cdot \Lambda^{-1}(\theta_1) \cdot V.
\label{eq_mat_10}
\end{align}
Now \eqref{eq_mat_12} follows from \eqref{eq_mat_10}.
\end{proof}
\begin{cor}
Suppose that, for any complex square matrix $A$, we denote
by $\sigma_{\min}(A)$ and $\sigma_{\max}(A)$, respectively, the minimal
and maximal singular values of $A$.
Then, under the assumptions of Theorem~\ref{thm_mat} above,
\begin{align}
& \sigma_{\min}( A(\theta_m) \cdot \dots \cdot A(\theta_1) 
\cdot V \cdot \Lambda(\theta_1)) \geq
2 \cdot \sin \left( \frac{\theta_1}{2} \right), \\
& \sigma_{\max}( A(\theta_m) \cdot \dots \cdot A(\theta_1) 
\cdot V \cdot \Lambda(\theta_1)) \leq
2 \cdot \cos \left( \frac{\theta_1}{2} \right),
\label{eq_mat_12a}
\end{align}
and also
\begin{align}
& \sigma_{\min}( A(\theta_m) \cdot \dots \cdot A(\theta_1) ) \geq
\tan \left( \frac{\theta_1}{2} \right), \\
& \sigma_{\max}( A(\theta_m) \cdot \dots \cdot A(\theta_1) ) \leq
\cot \left( \frac{\theta_1}{2} \right).
\label{eq_mat_12b}
\end{align}
\label{cor_mat}
\end{cor}
%%%%%%%%
\begin{thm}
Suppose, in addition to the hypothesis of Theorem~\ref{thm_mat},
that $\delta>0$ is a real number, and that the vector $x \in \Rc^2$
is defined
via the formula
\begin{align}
x = \begin{pmatrix} 1 \\ 1+\delta \end{pmatrix}.
\label{eq_mat_13}
\end{align}
Then,
\begin{align}
\frac{
\min\left\{ \left| A(\theta_j) \cdot \dots \cdot A(\theta_1) \cdot x \right|
\; : \; 1 \leq j \leq m \right\}
}
{
\max\left\{ \left| A(\theta_j) \cdot \dots \cdot A(\theta_1) \cdot x \right|
\; : \; 1 \leq j \leq m \right\}
}
\geq
\frac{\theta_1}{2},
\label{eq_mat_15c}
\end{align}
and also,
\begin{align}
\frac{ \left| A(\theta_m) \cdot \dots \cdot A(\theta_1) \cdot x \right| }
{|x|} & \; \leq 1 + \frac{1}{2} \cdot 
\frac{(4/\theta_1^2-1) \cdot \delta^2}{(2+\delta)^2+\delta^2 }.
\label{eq_mat_15e}
\end{align}
\label{thm_delta}
\end{thm}
\begin{proof}
Due to the combination of \eqref{eq_mat_2}, \eqref{eq_mat_00} and 
\eqref{eq_mat_13},
\begin{align}
\Lambda^{-1}(\theta_1) \cdot V \cdot x =
\frac{1}{2\sqrt{2}} \cdot
\begin{pmatrix}
i \cdot \delta / \sin(\theta_1/2) \\
(2+\delta) / \cos(\theta_1/2)
\end{pmatrix}.
\label{eq_mat_14}
\end{align}
We combine \eqref{eq_mat_14} with \eqref{eq_mat_12} and
\eqref{eq_mat_12a} to conclude that
\begin{align}
\left| A(\theta_m) \cdot \dots \cdot A(\theta_1) \cdot x \right| \leq 
\frac{1}{\sqrt{2}} \left|
\begin{pmatrix} \delta \cdot \cot(\theta_1/2) \\
2+\delta
\end{pmatrix}
\right|
\leq
\frac{1}{\sqrt{2}} \left|
\begin{pmatrix} 2 \cdot \delta / \theta_1 \\
2+\delta
\end{pmatrix}
\right|.
\label{eq_mat_14a}
\end{align}
and
\begin{align}
\left| A(\theta_m) \cdot \dots \cdot A(\theta_1) \cdot x \right| \geq 
\frac{1}{\sqrt{2}} \left|
\begin{pmatrix} \delta \\
(2+\delta) \cdot \tan(\theta_1/2)
\end{pmatrix}
\right|
\geq 
\frac{1}{\sqrt{2}} \left|
\begin{pmatrix} \delta \\
(2+\delta) \cdot \theta_1/2
\end{pmatrix}
\right|.
\label{eq_mat_14b}
\end{align}
It follows from \eqref{eq_mat_14a} that
\begin{align}
\left| A(\theta_m) \cdot \dots \cdot A(\theta_1) \cdot x \right|^2 \leq
\frac{1}{2} \cdot \left(
(2+\delta)^2 + 
\left( \frac{2 \cdot \delta}{\theta_1} \right)^2
\right).
\label{eq_mat_15a}
\end{align}
Also, it follows from \eqref{eq_mat_14b} that
\begin{align}
\left| A(\theta_m) \cdot \dots \cdot A(\theta_1) \cdot x \right|^2 \geq
\frac{1}{2} \cdot \left(
(2+\delta)^2 + 
\left( \frac{2 \cdot \delta}{\theta_1} \right)^2
\right)
\cdot \frac{ \theta_1^2 }{4}.
\label{eq_mat_15b}
\end{align}
Now \eqref{eq_mat_15c} follows
from the combination of \eqref{eq_mat_15a} and
\eqref{eq_mat_15b}.
Next we observe that, due to \eqref{eq_mat_13},
\begin{align}
|x|^2 = (1+\delta)^2 + 1 = \frac{1}{2} \cdot \left( (2+\delta)^2 + \delta^2
\right).
\label{eq_mat_15d}
\end{align}
We combine \eqref{eq_mat_15a} with \eqref{eq_mat_15d} to conclude that
\begin{align}
\frac{ \left| A(\theta_m) \cdot \dots \cdot A(\theta_1) \cdot x \right| }
{|x|} \leq \sqrt{1 + 
\frac{(4/\theta_1^2-1) \cdot \delta^2}{(2+\delta)^2+\delta^2 }},
\label{eq_mat_15ee}
\end{align}
which implies \eqref{eq_mat_15e}.
\end{proof}
\begin{cor}
Suppose, in addition to the hypotheses of Theorem~\ref{thm_delta},
that $l \geq 1$ is an integer, that
\begin{align}
\theta_1 \cdot \left( l + \frac{5}{2} \right) \geq \frac{\pi}{4}
\label{eq_mat_15f}
\end{align}
(compare to \eqref{eq_large_angle_theta0}), and that
\begin{align}
-\frac{1}{l+3/2} < \delta < \frac{1}{l}
\label{eq_mat_15g}
\end{align}
(see \eqref{eq_large_angle_ratio20}). Then,
\begin{align}
\frac{|x|}{9 \cdot l} <
\left| A(\theta_m) \cdot \dots \cdot A(\theta_1) \cdot x \right| <
4 \cdot |x|.
\label{eq_mat_15ga}
\end{align}
\label{cor_delta}
\end{cor}
\begin{proof}
The right inequality in \eqref{eq_mat_15ga} follows
from the combination of  \eqref{eq_mat_15e}, 
\eqref{eq_mat_15f}, \eqref{eq_mat_15g}.
The left inequality in \eqref{eq_mat_15ga} follows from
the combination of \eqref{eq_mat_15f} and
\eqref{eq_mat_15c}.
\end{proof}

\subsection{Error Analysis}
\label{sec_about_errors}
In Section~\ref{sec_vectors} above, we investigated 
various analytical properties of eigenvectors of
certain tridiagonal symmetric matrices. 
This section deals with stability issues pertaining to
the numerical evaluation of such eigenvectors.

The following theorem is closely related to Theorem~\ref{thm_vec_grow}
in Section~\ref{sec_vectors}.
\begin{thm}
Suppose that $k>2$ is an integer, and that
\begin{align}
B_1 > B_2 > \dots > B_k \geq 2
\label{eq_err_grow_b}
\end{align}
are real numbers. Suppose also that $x_1, \dots, x_{k+1}$
are real numbers defined via the recurrence relation
\begin{align}
& x_1 = 1, \nonumber \\
& x_2 = B_1, \nonumber \\
& x_{j+1} = B_j \cdot x_j - x_{j-1},
\label{eq_err_grow_rec}
\end{align}
for $j\geq 2$,
and that the real numbers $r_1, \dots, r_k$ are defined via the formula
\begin{align}
r_j = \frac{x_{j+1}}{x_j},
\label{eq_err_grow_rs}
\end{align}
for every $j=1,\dots,k$. Then,
\begin{align}
r_j = B_j - \frac{1}{r_{j-1}},
\label{eq_err_grow_r_rec}
\end{align}
for every $j=2,\dots,k$.
\label{thm_err_grow}
\end{thm}
\begin{proof}
The recurrence relation
\eqref{eq_err_grow_r_rec} follows from the combination of 
\eqref{eq_err_grow_rec}, \eqref{eq_err_grow_rs}.
\end{proof}
\begin{thm}
Suppose that $k>2$ is an integer, and that the real numbers
$B_1,\dots,B_k$, $x_1,\dots,x_{k+1}$, $r_1,\dots,r_k$
are those of Theorem~\ref{thm_err_grow} above. Suppose
also that $\varepsilon>0$ is the machine precision,
that $B_1, \dots, B_k$ are defined to machine precision, 
and that $x_1,\dots,x_{k+1}$, $r_1,\dots,r_k$
are calculated, respectively, via \eqref{eq_err_grow_rec},
\eqref{eq_err_grow_r_rec}. Then,
\begin{align}
\text{\rm{rel}}(r_j) \leq (2 \cdot j - 1) \cdot \varepsilon,
\label{eq_err_grow_rel_r}
\end{align}
for every $j=1,\dots,k$,
\begin{align}
\text{\rm{rel}}(x_{j+1}) \leq \varepsilon \cdot j^{2},
\label{eq_err_grow_rel_x}
\end{align}
for every $j=1,\dots,k$, and also
\begin{align}
\text{\rm{rel}}(x_1^2 + x_2^2 + \dots + x_k^2 + x_{k+1}^2) \leq
\varepsilon \cdot 2 \cdot k^{2}.
\label{eq_err_grow_rel_sum}
\end{align}
\label{thm_err_grow_rel}
\end{thm}
\begin{proof}
First, suppose that $\varepsilon_1,\dots, \varepsilon_k$
and $\delta_1,\dots,\delta_k$
are real numbers, that
\begin{align}
|\delta_{j-1}| \leq \varepsilon,
\label{eq_err_grow_rel_0}
\end{align}
for every $j=2,\dots,k$, that
\begin{align}
\hat{r}_{j-1} = r_{j-1} \cdot (1 + \varepsilon_{j-1}),
\label{eq_err_grow_rel_1}
\end{align}
for every $j=2,\dots,k$,
that
\begin{align}
\hat{B}_{j-1} = B_{j-1} \cdot (1 + \delta_{j-1}),
\label{eq_err_grow_rel_1a}
\end{align}
for every $j=2,\dots,k$,
and that
\begin{align}
\hat{r}_{j} = \hat{B}_j - \frac{1}{\hat{r}_{j-1}},
\label{eq_err_grow_rel_2}
\end{align}
for every $j=2,\dots,k$.
Then, due to the combination of \eqref{eq_err_grow_rel_1},
\eqref{eq_err_grow_rel_2}, \eqref{eq_err_grow_r_rec},
\begin{align}
\hat{r}_{j} & \; = \hat{B}_j-\frac{1}{r_{j-1}} + \frac{1}{r_{j-1}} - 
\frac{1}{\hat{r}_{j-1}} \nonumber \\
& \; =
r_j \cdot \left(1 + \frac{\varepsilon_{j-1}}{r_{j-1} \cdot r_j \cdot
(1 + \varepsilon_{j-1})} + \frac{B_j \cdot \delta_j}{r_j} \right).
\label{eq_err_grow_rel_3}
\end{align}
Also, due to Theorem~\ref{thm_vec_grow} in Section~\ref{sec_vectors},
\begin{align}
B_1 = r_1 > r_2 > \dots > r_k > 1,
\label{eq_err_grow_rel_4}
\end{align}
and, moreover, for every $j=1,\dots,k$,
\begin{align}
\frac{B_j}{r_j}<2.
\label{eq_err_grow_rel_5}
\end{align}
We combine \eqref{eq_err_grow_rel_0}, 
\eqref{eq_err_grow_rel_3}, \eqref{eq_err_grow_rel_4},
\eqref{eq_err_grow_rel_5} to conclude 
\eqref{eq_err_grow_rel_r}.
Next, due to \eqref{eq_err_grow_rs},
\begin{align}
x_{j+1} = r_1 \cdot r_2 \cdot r_3 \cdot \dots \cdot r_j,
\label{eq_err_grow_rel_6}
\end{align}
and we combine \eqref{eq_err_grow_rel_6} with \eqref{eq_err_grow_rel_r}
to obtain
\begin{align}
\text{\rm{rel}}(x_{j+1}) \leq \varepsilon \cdot 
\left(1 + 3 + \dots + 2j-1\right),
\label{eq_err_grow_rel_7}
\end{align}
for every $j=1,\dots,k-1$,
which implies \eqref{eq_err_grow_rel_x}.
Finally, due to \eqref{eq_err_grow_rel_x},
\begin{align}
\text{\rm{rel}}(x_1^2 + \dots + x_{k+1}^2) & \leq
\frac{ \sum_{j=1}^k x_{j+1}^2 \cdot (1+\varepsilon \cdot j^{2})^2-
(x_1^2+\dots+x_{k+1}^2)}{x_1^2+\dots+x_{k+1}^2} \nonumber \\
& = \varepsilon \cdot 
\frac{ 
\sum_{j=1}^k x_{j+1}^2 \cdot (2 \cdot j^{2} +
\varepsilon \cdot j^4)
}{x_1^2+\dots+x_{k+1}^2},
\label{eq_err_grow_rel_8}
\end{align}
which implies \eqref{eq_err_grow_rel_sum}.
\end{proof}
%%%%%%%%%%%%%%%%%%%%%%%%%%%%%%%%%%%%%%%%%%%5
%
%

\begin{thm}
Suppose that $k>0$ and $l>0$ are integers, that
\begin{align}
0 < \theta_k < \theta_{k+1} < \dots < \theta_{k+l-1} < 
\frac{\pi}{4} \cdot \frac{1}{l+3/2}
\label{eq_out_theta}
\end{align}
are real numbers, and that the real numbers $B_{k+1},\dots,B_{k+l}$
are defined via the formula
\begin{align}
B_{j+1} = 2 \cdot \cos(\theta_j),
\label{eq_out_b}
\end{align}
for every $j=k,\dots,k+l-1$.
Suppose also that $\varepsilon > 0$,
that the real numbers $x_k, x_{k+1}$ are those of
Theorem~\ref{thm_err_grow} above,
that the sequence $x_{k+2}, \dots, x_{k+l+1}$ is defined via the formula
\begin{align}
x_{j+2} = B_{j+1} \cdot x_{j+1} - x_j,
\label{eq_out_x_rec}
\end{align}
for every $j=k,\dots,k+l-1$,
and that the real numbers $r_k, \dots, r_{k+l}$ are defined
via \eqref{eq_err_grow_rs} for every $j=k,\dots,k+l$.
Suppose furthermore that $\varepsilon>0$ is the machine precision,
that $B_{k+1}, \dots, B_{k+l}$ are defined to precision $\varepsilon$,
and that the precision of $r_k, x_k, x_{k+1}$ is 
described in \eqref{eq_err_grow_rel_r},
\eqref{eq_err_grow_rel_x} 
of Theorem~\ref{thm_err_grow_rel} above.
Then,
\begin{align}
\text{\rm{rel}}(x_{k+m+1}) <
\varepsilon \cdot (k+2\cdot m)^2,
\label{eq_out_17}
\end{align}
for every $m=1,\dots,l$.
Also,
\begin{align}
\text{\rm{rel}}(x_{k+2}^2 + \dots + x_{k+l}^2 + x_{k+l+1}^2) <
2 \cdot \varepsilon \cdot (k+2\cdot l)^2.
\label{eq_out_18}
\end{align}
In addition,
\begin{align}
\text{\rm{rel}}(r_{k+l}) < 4 \cdot (k+l) \cdot \varepsilon.
\label{eq_out_19}
\end{align}
\label{thm_out}
\end{thm}
\begin{proof}
Suppose that the real numbers $C_1,\dots,C_l$ are defined via the formula
\begin{align}
C_j = \frac{ \cos\left( (j-1/2) \cdot \theta_{k+l-1} \right) }
     { \cos\left( (j+1/2) \cdot \theta_{k+l-1} \right) },
\label{eq_out_0}
\end{align}
for every $j=1,\dots,l$.
Then, due to \eqref{eq_large_angle_ratio0},
\begin{align}
\frac{1}{r_{k+j}} = \frac{x_{k+j}}{x_{k+j+1}} < C_j,
\label{eq_out_1}
\end{align}
for every $j=1,\dots,l$.
It follows from \eqref{eq_out_1} that
\begin{align}
\frac{1}{r_{k+1} \cdot \dots \cdot r_{k+m-1}} <
\frac{1}{\cos\left( (m-1/2) \cdot \theta_{k+l-1} \right)},
\label{eq_out_2}
\end{align}
for every $m=2,\dots,l$. Therefore,
\begin{align}
\frac{1}{r_k \cdot r_{k+1}^2 \cdot \dots \cdot r_{k+m-1}^2 \cdot
r_{k+m}} & \; <
\frac{C_m}{\cos^2\left( (m-1/2)\cdot \theta_{k+l-1} \right)} 
\nonumber \\
& \; < \frac{1}{ \cos^2\left( (m+1/2)\cdot \theta_{k+l-1} \right) },
\label{eq_out_3}
\end{align}
for every $m=2,\dots,l$.
We observe that, similar to \eqref{eq_err_grow_r_rec},
\begin{align}
\frac{B_{k+m}}{r_{k+m}} = 1 + \frac{1}{r_{k+m} \cdot r_{k+m-1}},
\label{eq_out_4a}
\end{align}
for every $m = 1, \dots, l$.
Suppose that for every
$j=k,k+1,\dots,k+l$ 
the relative errors of $r_j, B_j$ are denoted,
respectively, by $\varepsilon_j, \delta_j$ 
(similar to \eqref{eq_err_grow_rel_1}, \eqref{eq_err_grow_rel_1a}).
Due to the combination of 
\eqref{eq_vec_grow_xs}, \eqref{eq_err_grow_r_rec},
\eqref{eq_out_4a},
\begin{align}
& \hat{r}_{k+m} = 
\nonumber \\
& r_{k+m} \cdot \left(
1 +
\frac{\varepsilon_{k+m-1}}{r_{k+m-1}\cdot r_{k+m} \cdot (1+\varepsilon_{k+m-1})}
+ \; \delta_{k+m} \cdot \left(
1 + \frac{1}{r_{k+m-1}\cdot r_{k+m}}
\right)
\right),
\label{eq_out_4b}
\end{align}
for every $m=1,\dots,l$. In particular, using \eqref{eq_out_1},
\begin{align}
\varepsilon_{k+1} \leq \varepsilon_k \cdot C_1 + 
\varepsilon \cdot (1+C_1) \leq 
(\varepsilon_k + 2 \varepsilon) \cdot C_1,
\label{eq_out_4}
\end{align} 
and, more generally,
\begin{align}
\varepsilon_{k+m} < (\varepsilon_k + 2 \cdot m \cdot \varepsilon) \cdot
C_1^2 \cdot C_2^2 \cdot \dots C_{k+m-1}^2 \cdot C_{k+m},
\label{eq_out_5}
\end{align}
for every $m=1,\dots,l$.
Next, we combine \eqref{eq_out_5} with \eqref{eq_out_3}
and Theorem~\ref{thm_vec_grow} in Section~\ref{sec_vectors} to conclude that
\begin{align}
\varepsilon_{k+m} < ( \varepsilon_k + 2 \cdot m \cdot \varepsilon) \cdot
{\cos^{-2}\left( (m+1/2) \cdot \theta_{k+l-1} \right) },
\label{eq_out_6}
\end{align}
for every $m=1,\dots,l$. We substitute
\eqref{eq_out_theta} into \eqref{eq_out_6} to obtain the inequality
\begin{align}
\varepsilon_{k+m} < ( \varepsilon_k + 2 \cdot m \cdot \varepsilon) \cdot
\cos^{-2}\left( 
\frac{\pi}{4} \cdot \frac{2m+1}{2l+3}
\right) < 2 \cdot ( \varepsilon_k + 2 \cdot m \cdot \varepsilon),
\label{eq_out_7}
\end{align}
for every $m=1,\dots,l$. In particular, for $m=l$,
\begin{align}
\varepsilon_{k+l} < 2 \cdot 
( \varepsilon_k + 2 \cdot l \cdot \varepsilon).
\label{eq_out_8}
\end{align}
It follows from \eqref{eq_out_7} that
\begin{align}
& \varepsilon_{k+1} + \cdots + \varepsilon_{k+m} < 
2 \cdot m \cdot \varepsilon_k + 2 \cdot m \cdot (m+1) \cdot \varepsilon,
\label{eq_out_12}
\end{align}
for every integer $m=1,\dots,l$. We observe that
\begin{align}
x_{k+m+1} = x_{k+1} \cdot r_{k+1} \cdot \dots \cdot r_{k+m},
\label{eq_out_13}
\end{align}
for every $m>1$, and hence (ignoring the $O(\varepsilon^2)$ terms)
\begin{align}
\text{\rm{rel}}(x_{k+m+1}) <
\text{\rm{rel}}(x_{k+1})  +
2 \cdot m \cdot \varepsilon_k + 2 \cdot m \cdot (m+1) \cdot \varepsilon,
\label{eq_out_14}
\end{align}
for every $m=1,2,\dots,l$. 
We combine \eqref{eq_out_14} with Theorem~\ref{thm_err_grow_rel}
above to obtain \eqref{eq_out_17}, \eqref{eq_out_18},
and combine Theorem~\ref{thm_err_grow_rel} with \eqref{eq_out_8}
to obtain \eqref{eq_out_19}.
\end{proof}

\begin{thm}
Suppose that $k>0$ and $0<l<m$ are integers,
that $\theta_{k+l}, \dots, \theta_{k+m}$ are real numbers such that
\begin{align}
0 < \frac{\pi}{2 \cdot (2 \cdot l + 5)} \leq \theta_{k+l} < \dots
  < \theta_{k+m} \leq \frac{\pi}{2},
\label{eq_osc_1}
\end{align}
that $x_{k+l}, \dots, x_{k+m+2}$
are real numbers, that $x_{k+l}, x_{k+l+1}$ satisfy
\eqref{eq_large_angle_ratio20},
and that $v_{k+l}, \dots, v_{k+m+1}$ are
vectors in $\Rc^2$ defined via the formula
\begin{align}
v_j = \begin{pmatrix} x_j \\ x_{j+1} \end{pmatrix}
\label{eq_osc_2}
\end{align}
for every $j=k+l, \dots, k+m+1$.
Suppose also 
that the real $2 \times 2$ matrices $A(\theta_{k+l}), \dots, A(\theta_{k+m+1})$
are defined via \eqref{eq_mat_8},
and that 
\begin{align}
v_{j+1} = A(\theta_j) \cdot v_j
\label{eq_osc_3}
\end{align}
for every $j=k+l, \dots, k+m$.
Suppose, in addition, that $\varepsilon>0$ is the machine precision,
that $\cos(\theta_{k+j})$ are defined
to relative precision $\varepsilon$ for every $j=l, \dots, m$,
and that $v_{k+l+1}, \dots, v_{k+m}$ are evaluated recursively
via \eqref{eq_osc_3}. 
Then,
\begin{align}
\text{\rm{rel}}(v_j) \leq
9 \cdot l \cdot 
\text{\rm{rel}}(v_{k+l})
 \cdot \frac{\| v_{k+l} \|}{\| v_j \|},
\label{eq_osc_14}
\end{align}
for every $j=k+l+1,\dots,k+m+1$. Also,
\begin{align}
\text{\rm{rel}}(v_j) \leq
81 \cdot l^2 \cdot \text{\rm{rel}}(v_{k+l}),
\label{eq_osc_14a}
\end{align}
for every $j=k+l+1,\dots,k+m+1$.
Finally,
\begin{align}
\text{\rm{rel}}\left(
x_{k+l}^2 + 2 \cdot \left(x_{k+l+1}^2 + \dots + x_{k+m+1}^2\right) +
x_{k+m+2}^2
\right) \leq
162 \cdot l^2 \cdot \text{\rm{rel}}(v_{k+l}).
\label{eq_osc_19}
\end{align}
\label{thm_osc}
\end{thm}
\begin{proof}
Due to the combination of \eqref{eq_osc_1},
\eqref{eq_osc_3} with \eqref{eq_mat_12b}
and \eqref{eq_large_angle_ratio20},
\begin{align}
\text{\rm{rel}}(v_j) \cdot \| v_j \| & \;
\leq \cot\left( \frac{\theta_{k+l}}{2} \right) \cdot
\| v_{k+l} \| \cdot \text{\rm{rel}}(v_{k+l}) 
% \nonumber \\ & \; 
\leq \frac{2}{\theta_{k+l}} \cdot \| v_{k+l} \| \cdot 
\text{\rm{rel}}(v_{k+l})  \nonumber \\
& \; \leq 9 \cdot l \cdot \| v_{k+l} \| \cdot 
\text{\rm{rel}}(v_{k+l}) ,
\label{eq_osc_7}
\end{align}
for every $j=k+l+1,\dots,k+m+1$, which implies \eqref{eq_osc_14}.
The combination of \eqref{eq_osc_14} and \eqref{eq_mat_15ga}
implies \eqref{eq_osc_14a}.

Thus, ignoring the $O(\varepsilon^2)$ terms,
\begin{align}
\text{\rm{rel}}(\|v_j\|^2) = \text{\rm{rel}}(v_j \cdot v_j)
\leq
2 \cdot \text{\rm{rel}}(v_j) \leq
18 \cdot l \cdot \text{\rm{rel}}(v_{k+l}) \cdot 
\frac{\|v_{k+l}\|}{\|v_j\|},
\label{eq_osc_15}
\end{align}
for every $j=k+l+1,\dots,k+m+1$. Therefore,
\begin{align}
& \text{\rm{rel}}(\|v_{k+l}\|^2 + \dots + \|v_{k+m+1}\|^2) 
\leq \nonumber \\
& 18 \cdot l \cdot \text{\rm{rel}}(v_{k+l}) 
\cdot \| v_{k+l} \|
\cdot \frac{ \|v_{k+l}\| + \dots + \| v_{k+m+1} \| }
   { \|v_{k+l}\|^2 + \dots + \| v_{k+m+1} \|^2 }.
\label{eq_osc_16}
\end{align}
We substitute \eqref{eq_mat_15ga} into \eqref{eq_osc_16}
to obtain 
\begin{align}
\text{\rm{rel}}(\|v_{k+l}\|^2 + \dots + \|v_{k+m+1}\|^2) 
\leq 
18 \cdot 9 \cdot l^2 \cdot \text{\rm{rel}}(v_{k+l}) 
\cdot \| v_{k+l} \|,
\label{eq_osc_16a}
\end{align}
and substitute \eqref{eq_osc_2} into \eqref{eq_osc_16a}
to obtain
\eqref{eq_osc_19}.
\end{proof}
\begin{cor}
Suppose, in addition to the hypothesis of Theorem~\ref{thm_osc},
that the relative accuracy of $x_{k+l}$ satisfies
\eqref{eq_out_17} in Theorem~\ref{thm_out}. Then,
\begin{align}
\text{\rm{rel}}\left(
x_{k+l}^2 + \dots + x_{k+m+2}^2
\right) \leq
162 \cdot l^2 \cdot (k+2\cdot l)^2 \cdot \varepsilon.
\label{eq_osc_20}
\end{align}
\label{cor_osc}
\end{cor}
\begin{proof}
We observe that
\begin{align}
\text{\rm{rel}}( x_{k+m+1}^2 + 2 \cdot x_{k+m+2}^2 ) \leq
2 \cdot \text{\rm{rel}} \left( (x_{k+m+1}, x_{k+m+2})^T \right),
\label{eq_cor_osc_1}
\end{align}
and combine this observation with \eqref{eq_out_17}, \eqref{eq_osc_19}
to obtain \eqref{eq_osc_20}.
\end{proof}

In the following two theorems, we summarize
Theorems~\ref{thm_err_grow}, \ref{thm_err_grow_rel}, \ref{thm_out}, \ref{thm_osc}
and Corollary~\ref{cor_osc} above.
\begin{thm}
Suppose that $k>0$, $l>0$ and $r > k+l$ are integers, that
$B_1,\dots,B_r$ is a sequence of real numbers, that
\begin{align}
B_1 > B_2 > \dots > B_k \geq 2 > B_{k+1} > \dots > B_{k+l} >
2 \cdot \cos\left( \frac{\pi}{4} \cdot \frac{1}{l+3/2} \right)
\label{eq_sum_left_1}
\end{align}
and that
\begin{align}
2 \cdot \cos\left( \frac{\pi}{4} \cdot \frac{1}{l+5/2} \right) \geq
B_{k+l+1} > \dots > B_r \geq 0.
\label{eq_sum_left_2}
\end{align}
Suppose also that $\varepsilon>0$ is the machine precision,
that $B_1,\dots,B_r$ are defined to precision $\varepsilon$,
and that the real numbers
$x_1,x_2,\dots,x_{r+1}$ are evaluated from $B_1,\dots,B_r$
via the recurrence relation \eqref{eq_err_grow_rec}. Then,
\begin{align}
\text{\rm{rel}}(x_j) \leq (j-1)^2 \cdot \varepsilon,
\label{eq_sum_left_3}
\end{align}
for every $j=1,\dots,k+1$. Also, 
\begin{align}
\text{\rm{rel}}(x_{k+1+j}) \leq (k+2\cdot j)^2 \cdot \varepsilon,
\label{eq_sum_left_4}
\end{align}
for every $j=1,\dots,l$. In addition,
\begin{align}
& \text{\rm{rel}}
\label{eq_sum_left_5}
\begin{pmatrix} x_{j-1} \\ x_j \end{pmatrix}
\leq 81 \cdot l^2 \cdot (k+2 \cdot l)^2 \cdot \varepsilon, \\
\label{eq_sum_left_5a}
& \text{\rm{rel}}(x_j) \leq 18 \cdot l \cdot (k+2 \cdot l)^2 
\cdot \left| \frac{ x_{k+l} }{x_j} \right| \cdot \varepsilon
\end{align}
for every $j=k+l+1, \dots, r+1$. Finally,
\begin{align}
\text{\rm{rel}}\left(
x_1^2 + \dots + x_r^2 + x_{r+1}^2
\right) \leq 
162 \cdot l^2 \cdot (k+2\cdot l)^2 \cdot \varepsilon.
\label{eq_sum_left_6}
\end{align}
\label{thm_summary_left}
\end{thm}
\begin{proof}
The combination of
Theorems~\ref{thm_err_grow_rel}, \ref{thm_out}, \ref{thm_osc}
and Corollary~\ref{cor_osc} above.
\end{proof}
%
%
%The following theorem is similar to Theorem~\ref{thm_summary_left}.
\begin{thm}
Suppose that $n>0$ and $r,p,q>0$ are integers, 
that
\begin{align}
r+p+q+1 \leq n,
\label{eq_sum_right_0}
\end{align}
that
$B_{r+1},\dots,B_n$ is a sequence of real numbers, that
\begin{align}
B_n < \dots < B_{n+1-q} \leq -2 < B_{n-q} < \dots < B_{n+1-p-q} <
-2 \cdot \cos\left( \frac{\pi}{4} \cdot \frac{1}{p+3/2} \right)
\label{eq_sum_right_1}
\end{align}
and that
\begin{align}
-2 \cdot \cos\left( \frac{\pi}{4} \cdot \frac{1}{p+5/2} \right) \leq
B_{n-q-p} < \dots < 
B_{r+1} < 0.
\label{eq_sum_right_2}
\end{align}
Suppose also that $\varepsilon>0$ is the machine precision,
that $B_{r+1},\dots,B_n$ are defined to precision $\varepsilon$,
and that the real numbers
$y_n,y_{n-1},\dots,y_{r+1},y_r$ are evaluated from $B_{r+1},\dots,B_n$
via the recurrence relation 
\begin{align}
& y_n = 1, \nonumber \\
& y_{n-1} = B_n, \nonumber \\
& y_{j-1} = B_j \cdot y_j - y_{j+1},
\label{eq_err_decay_rec}
\end{align}
for $j< n$ (similar to \eqref{eq_err_grow_rec}, but the direction
is reversed).
Then,
\begin{align}
\text{\rm{rel}}(y_{n-j}) \leq j^2 \cdot \varepsilon,
\label{eq_sum_right_3}
\end{align}
for every $j=1,\dots,q$. Also, 
\begin{align}
\text{\rm{rel}}(y_{n-q-j}) \leq (q+2\cdot j)^2 \cdot \varepsilon,
\label{eq_sum_right_4}
\end{align}
for every $j=1,\dots,p$. In addition,
\begin{align}
& \text{\rm{rel}}
\label{eq_sum_right_5}
\begin{pmatrix} y_{j+1} \\ y_j \end{pmatrix}
\leq 81 \cdot p^2 \cdot (q+2 \cdot p)^2 \cdot \varepsilon, \\
\label{eq_sum_right_5a}
& \text{\rm{rel}}(y_j) \leq 18 \cdot l \cdot (k+2 \cdot l)^2 
\cdot \left| \frac{ y_{n-p-q} }{y_j} \right| \cdot \varepsilon,
\end{align}
for every $j=r, \dots, n-q-p-1$. Finally,
\begin{align}
\text{\rm{rel}}\left(
y_n^2 + \dots + y_{r+2}^2
\right) \leq 
162 \cdot p^2 \cdot (q+2\cdot p)^2 \cdot \varepsilon.
\label{eq_sum_right_6}
\end{align}
\label{thm_summary_right}
\end{thm}
\begin{proof}
We define $\tilde{B}_1,\dots$ and $\tilde{x}_1,\dots$ via the formula
\begin{align}
\tilde{B}_j = -B_{n+1-j}
\label{eq_sumr_1}
\end{align}
and
\begin{align}
\tilde{x}_j = (-1)^{j+1} \cdot y_{n+1-j},
\label{eq_sumr_2}
\end{align}
for $j \geq 1$. Then, due to the combination of \eqref{eq_sumr_1},
\eqref{eq_sumr_2} with \eqref{eq_err_decay_rec},
\begin{align}
\tilde{x_{j+1}} & \; = (-1)^j \cdot y_{n-j} \nonumber \\
 & \; = (-1)^j \cdot \left( B_{n-(j-1)} \cdot y_{n-(j-1)} - y_{n-(j-2)}
\right) \nonumber \\
& \; =
(-1)^(j+1) \cdot \left( \tilde{B}_j \cdot \tilde{x}_j \cdot (-1)^{j+1} +
\tilde{x}_{j-1} \cdot (-1)^j \right) \nonumber \\
& \; = \tilde{B}_j \cdot \tilde{x}_j - \tilde{x}_{j-1},
\label{eq_sumr_3}
\end{align}
for $j \geq 2$. We conclude by combining \eqref{eq_sumr_3} with
\eqref{eq_sum_right_0}, \eqref{eq_sum_right_1} and
Theorem~\ref{thm_summary_left} above.
\end{proof}
\begin{thm}
Suppose, in addition to hypotheses of 
Theorems~\ref{thm_summary_left}, \ref{thm_summary_right},
that the $n \times n$ matrix $B$, defined via the formula
\begin{align}
B = 
\begin{pmatrix}
B_1 & 1 &   &   &   &   \\
1 & B_2 & 1 &   &   &   \\ 
  & 1 & B_3 & 1 &   &   \\
  &   & \ddots & \ddots & \ddots & \\
  &   &   & 1 & B_{n-1} & 1 \\
  &   &   &   & 1 & B_n \\
\end{pmatrix},
\label{eq_glue_1}
\end{align}
is singular, that $x_1,\dots,x_r, x_{r+1}$ are those of 
Theorem~\ref{thm_summary_left}, that
$y_r, y_{r+1}, \dots, y_n$ are 
those of Theorem~\ref{thm_summary_right}, that
the real number $s$ is defined via the formula
\begin{align}
s = \frac{ x_r \cdot y_r + x_{r+1} \cdot y_{r+1} }
         { |x_r \cdot y_r + x_{r+1} \cdot y_{r+1}| } \cdot
 \sqrt{ \frac{x_r^2 + x_{r+1}^2}{y_r^2 + y_{r+1}^2} },
\label{eq_glue_2}
\end{align}
and that the vector $z=(z_1,\dots,z_n)^T$ in $\Rc^n$ is defined via the formula
\begin{align}
z = \left(
x_1, \dots, x_r, x_{r+1}, s \cdot y_{r+2}, \dots, 
s \cdot y_n
\right)^T.
\label{eq_glue_3}
\end{align}
Then, $z$ is an eigenvector of $B$ corresponding to the zero eigenvalue.
Moreover,
\begin{align}
\text{\rm{rel}}(s) \leq 81 \cdot \left( (q+2\cdot p)^2 \cdot p^2 +
(k+2 \cdot l)^2 \cdot l^2 \right) \cdot \varepsilon,
\label{eq_glue_4}
\end{align}
and
\begin{align}
\text{\rm{rel}}(z_1^2 + \dots + z_n^2) \leq
243 \cdot \left(
p^2 \cdot (q+2\cdot p)^2 + 
(k+2 \cdot l)^2 \cdot l^2
\right) \cdot \varepsilon.
\label{eq_glue_11}
\end{align}
\label{thm_glue}
\end{thm}
\begin{proof}
Due to Theorem~\ref{thm_main_rec}
in Section~\ref{sec_vectors}, $x_1,\dots,x_{r+1}$ are the first
$r+1$ coordinates of an eigenvector of $B$ corresponding to the zero 
eigenvalue; also, $y_r, \dots, y_n$ are the last $n+1-r$ coordinates
of an eigenvector of $B$ in the same eigenspace. We combine this observation
with Theorem~\ref{thm_main_rec} and \eqref{eq_glue_2} to conclude
that $z$ is the eigenvector in the null-space of $B$ whose first
coordinate is equal to $1$. The inequality \eqref{eq_glue_4} follows
from the combination of \eqref{eq_sum_left_5} and \eqref{eq_sum_right_5}
(in particular, $s$ in \eqref{eq_glue_2} is well defined). We combine
\eqref{eq_glue_4} with \eqref{eq_sum_right_6} to obtain
\begin{align}
& \text{\rm{rel}}\left(
s^2 \cdot \left(y_n^2 + \dots + y_{r+2}^2
\right) \right) \leq \nonumber \\
& \left(
3 \cdot 81 \cdot p^2 \cdot (q+2\cdot p)^2 + 
81 \cdot (k+2 \cdot l)^2 \cdot l^2 \right)
\cdot \varepsilon.
\label{eq_glue_10}
\end{align}
Finally, we combine \eqref{eq_glue_10} with \eqref{eq_sum_left_6} to obtain
\eqref{eq_glue_11}.
\end{proof}
\begin{cor}
Suppose that, in addition to the hypothesis of Theorem~\ref{thm_glue},
the vector $X \in \Rc^n$ is evaluated from $z$
in \eqref{eq_glue_3} via the formula
\begin{align}
X = (X_1, \dots, X_n)^T = \frac{z}{\| z \|}.
\label{eq_cor_glue_1}
\end{align}
Then,
\begin{align}
\text{\rm{rel}}(X_1) \leq 
243 \cdot \left(
p^2 \cdot (q+2\cdot p)^2 + 
(k+2 \cdot l)^2 \cdot l^2
\right) \cdot \varepsilon,
\label{eq_cor_glue_2}
\end{align}
where $k,l,p,q,r$ are those of 
Theorems~\ref{thm_summary_left}, \ref{thm_summary_right}. More generally,
\begin{align}
\text{\rm{rel}}(X_j) \leq \text{\rm{rel}}(X_1) +
\text{\rm{rel}}(x_j),
\label{eq_cor_glue_3}
\end{align}
for every $2 \leq j \leq r+1$, and
\begin{align}
\text{\rm{rel}}(X_j) \leq \text{\rm{rel}}(X_1) +
\text{\rm{rel}}(y_j) + \text{\rm{rel}}(s),
\label{eq_cor_glue_4}
\end{align}
for every $j=r+2,\dots,n$, where the sequences $\left\{x_j\right\}$, 
$\left\{y_j\right\}$ and the real number $s$ are those
from Theorems~\ref{thm_summary_left}, \ref{thm_summary_right}, \ref{thm_glue}.
\label{cor_glue}
\end{cor}

%%%%%%%%%%%%%%%%%%%%%%%%%%%%%%%
%
% see test511.f for related numerical experiments and verifications
% see also test512.f
%
\subsection{Asymptotic Error Analysis of a Special Case}
\label{sec_asym}
The analysis of Section~\ref{sec_about_errors}
(e.g. Theorems~\ref{thm_summary_left}, \ref{thm_summary_right},
\ref{thm_glue} and Corollary~\ref{cor_glue}) is carried out for a fairly
general class of sequences $\left\{ B_j \right\}$
(and related matrices $B$ defined via \eqref{eq_glue_1}).
The resulting upper bounds on relative errors of coordinates
of the null-space eigenvector of $B$ depend on the parameters
$k,l,p,q$ determined from $\left\{ B_j \right\}$ via
\eqref{eq_sum_left_1}, \eqref{eq_sum_left_2},
\eqref{eq_sum_right_1}, \eqref{eq_sum_right_2}
(see e.g. the bounds in \eqref{eq_glue_11}, \eqref{eq_cor_glue_2}).

Despite the fact that these bounds are explicitly defined
by $B$, the relation between the {\it relative error} of, say,
the first coordinate $X_1$ of an eigenvector of unit norm
and the {\it magnitude} of $X_1$ is not immediately obvious
(see \eqref{eq_cor_glue_2}). In this section,
this relation is investigated in some detail for a special,
but still fairly broad class of matrices $B$
(that also appear in various applications; see e.g. 
Section~\ref{sec_applications}). 
First, we need
a technical theorem.
\begin{thm}
Suppose that $a \geq 1$ is a real number,
that $\delta > 1$ is a real number,
that the real number $D_a$ is defined via the formula
\begin{align}
D_a =
\sqrt{2} \cdot \int_0^{\pi/2} \left(\sin(\theta)\right)^{1+2/a} \; d\theta,
\label{eq_aa_01}
\end{align}
and that the real number
$\alpha(a,\delta)$ is the solution of the equation
\begin{align}
\alpha^2 \cdot \left( (1+\alpha)^a - 1 \right) \cdot \delta^2
= \frac{\pi^2}{32}
\label{eq_aa_27}
\end{align}
in the unknown $\alpha$.
Then,
\begin{align}
\frac{2 
\cdot \sqrt{2}}{3} \leq D_a 
= \sqrt{\frac{\pi}{2}} \cdot \frac{\Gamma(1+1/a)}{\Gamma(3/2+1/a)},
\leq \sqrt{2},
\label{eq_aa_14}
\end{align}
where $\Gamma$ is the standard Gamma function,
and also
\begin{align}
\alpha(a,\delta) \leq \left( \frac{ \pi^2 }{32 \cdot a \cdot \delta^2 }
\right)^{1/3}.
\label{eq_aa_28}
\end{align}
\label{thm_dalpha}
\end{thm}
\begin{proof}
The proof is straightforward, elementary, and will be omitted.
\end{proof}
The rest of this section is dedicated to asymptotic error analysis
pertaining to a certain class of symmetric tridiagonal matrices.
\begin{thm}
Suppose that $a \geq 1$ is a real number, that $\delta>1$ is a real number,
that the real numbers $D_a, \alpha(a,\delta)$ are those
of Theorem~\ref{thm_dalpha} above.
Suppose also that,
for any real number $c \geq 1$, the real number $\kappa(c)$ is defined
via the formula
\begin{align}
\kappa(c) = \delta^{2/(a+2)} \cdot c^{a/(a+2)},
\label{eq_aa_02}
\end{align}
and the sequence $B_1(c), B_2(c), \dots$ is defined via the formula
\begin{align}
B_j(c) = 2 +2 \cdot \left( \frac{\kappa(c)}{c} \right)^a  
-2 \cdot \left( \frac{j}{c} \right)^a,
\label{eq_aa_1}
\end{align}
for every $j=1,2,\dots$. Suppose also that,
for any real number $c \geq 1$, the sequence $x_1(c), x_2(c), \dots$
is defined from $\left\{ B_j(c) \right\}$ via \eqref{eq_err_grow_rec},
and the integers $k=k(c), l=l(c)$ are defined from 
$\left\{ B_j(c) \right\}$ via \eqref{eq_sum_left_1}, \eqref{eq_sum_left_2}.
Then,
\begin{align}
\label{eq_aa_03}
& k=k(c) = \kappa(c) \cdot (1+o(c)), \quad c \to \infty, \\
\label{eq_aa_04}
& l=l(c) = \alpha(a,\delta) \cdot \kappa(c) \cdot (1+o(c)), 
\quad c \to \infty, 
\end{align}
and also
\begin{align}
x_1(c) \leq x_k(c) \cdot \exp \left(-D_a \cdot \delta
\cdot (1 + o(1)) \right), \quad c \to \infty.
\label{eq_aa_05}
\end{align}
\label{thm_aa}
\end{thm}
\begin{proof}
In this proof, we omit the dependence of various parameters on $c$
whenever it causes no confusion.
First,
\eqref{eq_aa_03} follows from the combination of \eqref{eq_aa_1},
\eqref{eq_aa_02} and \eqref{eq_sum_left_1}.
We substitute \eqref{eq_aa_1}, \eqref{eq_aa_03} into 
\eqref{eq_vec_grow_cor} 
to obtain
\begin{align}
\frac{x_k}{x_1} & \; \geq \prod_{j=2}^k \left(
1 +  \left( \frac{\kappa}{c} \right)^a  
-\left( \frac{j}{c} \right)^a + 
\sqrt{ \left(    
1+
\left( \frac{\kappa}{c} \right)^a  
-\left( \frac{j}{c} \right)^a
\right)^2-1 }
\right) \nonumber \\
& \; = \prod_{j=2}^k \left(1 + \sqrt{ 2 \cdot \left(
\left( \frac{k}{c} \right)^a  
-\left( \frac{j}{c} \right)^a
\right)}
\right) \cdot (1 + o(1)), \quad c \to \infty.
\label{eq_aa_3}
\end{align}
We define the real-valued function $g$ via the formula
\begin{align}
g(x) = 1 +
\sqrt{ 2 \cdot \left(
\left( \frac{k}{c} \right)^a  
-\left( \frac{x}{c} \right)^a
\right)},
\label{eq_aa_4}
\end{align}
for real $0 \leq x \leq k$,
and combine \eqref{eq_aa_1},
\eqref{eq_aa_02}, \eqref{eq_aa_3},
\eqref{eq_aa_4} to obtain
\begin{align}
\prod_{j=2}^k \left(
\frac{B_j}{2} + \sqrt{ \left(\frac{B_j}{2}\right)^2-1 }
\right)
=
\exp \left((1+o(1)) \cdot 
\int_0^k \log(g(x)) \; dx
\right), \quad c \to \infty.
\label{eq_aa_5}
\end{align}
Since $\log(g(k))=0$ due to \eqref{eq_aa_4},
\begin{align}
\int_0^k \log(g(x)) = - \int_0^k x \cdot \frac{d}{dx} \log(g(x)) \; dx
= -\int_0^k \frac{x \cdot g'(x)}{g(x)} \; dx.
\label{eq_aa_6}
\end{align}
We combine \eqref{eq_aa_4} and \eqref{eq_aa_6} to obtain
\begin{align}
\int_0^k \log(g(x)) = 
\frac{a}{\sqrt{2}} \int_0^k \frac{ x^a \; dx }
{
\sqrt{2} \cdot (k^a - x^a) + \sqrt{c^a} \cdot \sqrt{k^a-x^2}
}.
\label{eq_aa_7}
\end{align}
We perform the changes of variable
\begin{align}
x^a = k^a \cdot \sin^2(\theta),
\label{eq_aa_10}
\end{align}
and substitute \eqref{eq_aa_10} into \eqref{eq_aa_7} to obtain
\begin{align}
\int_0^k \log(g(x))
=
k \cdot \int_0^{\pi/2} \frac{ \left(\sin(\theta)\right)^{1+2/a} \; d\theta }
{ \cos(\theta) + \sqrt{c^a/(2 \cdot k^a)}}.
\label{eq_aa_11}
\end{align}
Due to the combination of \eqref{eq_aa_11} and 
\eqref{eq_aa_01}, \eqref{eq_aa_02},
\eqref{eq_aa_03},
\begin{align}
\int_0^k \log(g(x)) = 
D_a \cdot \sqrt{ \frac{k^{a+2} }{ c^a } } \cdot (1+o(1)), \quad c \to \infty,
\label{eq_aa_15}
\end{align}
and we substitute \eqref{eq_aa_15} into \eqref{eq_aa_5} to obtain
\begin{align}
\prod_{j=2}^k \left(
\frac{B_j}{2} + \sqrt{ \left(\frac{B_j}{2}\right)^2-1 }
\right)
= \exp \left(
D_a \cdot \sqrt{ \frac{k^{a+2} }{ c^a } } \cdot (1+o(1))
\right), \quad c \to \infty.
\label{eq_aa_16}
\end{align}
We combine \eqref{eq_aa_16} with \eqref{eq_aa_02}, \eqref{eq_aa_03}
to obtain \eqref{eq_aa_05}.
Next, we combine \eqref{eq_sum_left_1}, \eqref{eq_sum_left_2},
\eqref{eq_aa_02}, \eqref{eq_aa_1}
to obtain
\begin{align}
\frac{(k+l)^a - k^a}{c^a} = 
\frac{\pi^2}{32 \cdot l^2} \cdot (1+o(1)), \quad c \to \infty.
\label{eq_aa_21}
\end{align}
If
\begin{align}
k(c) \ll l(c), \quad c \to \infty,
\label{eq_aa_22}
\end{align}
then due to \eqref{eq_aa_21}
\begin{align}
l^{a+2} = c^a \cdot \frac{ \pi^2 }{32} \cdot (1+o(1)), \quad c \to \infty,
\label{eq_aa_23}
\end{align}
in contradiction to the combination of 
\eqref{eq_aa_22} and \eqref{eq_sum_left_1}, \eqref{eq_sum_left_2}.
If, on the other hand,
\begin{align}
l \ll k, \quad c \to \infty,
\label{eq_aa_24}
\end{align}
then due to \eqref{eq_aa_21}, \eqref{eq_aa_03}
\begin{align}
l^3 = \frac{c^a}{k^{a-1}} \cdot (1+o(c)) = 
O\left( c^{a - (a-1)\cdot a/(a+2)} \right) =
O\left( c^{3a/(a+2)} \right), \quad c \to \infty,
\label{eq_aa_25}
\end{align}
in contradiction to the combination of
\eqref{eq_aa_24} and \eqref{eq_sum_left_1}, \eqref{eq_sum_left_2}. Therefore,
\begin{align}
l(c) = O(k(c)), \quad c \to \infty,
\label{eq_aa_26}
\end{align}
and we combine \eqref{eq_aa_26} with \eqref{eq_aa_02}, \eqref{eq_aa_03},
\eqref{eq_aa_21}
to obtain \eqref{eq_aa_04}.
\end{proof}
The following theorem compliments Theorem~\ref{thm_aa} above.
\begin{thm}
Suppose that $a \geq 1$ and $\varepsilon>0$ are real numbers.
Suppose also that, for any real number $c\geq 1$, the real numbers
$\mu(c), \nu(c), \rho(c)$ are defined via the formulae
\begin{align}
\label{eq_aa_30}
& \mu(c) = \left( \frac{2^{1/a} \cdot c}{a} \right)^{1/3} \cdot
\left( -\frac{3}{4} \cdot \log(\varepsilon) \right)^{2/3}, \\
\label{eq_aa_31}
& \nu(c) = 2^{1/a} \cdot c + \mu(c), \\
\label{eq_aa_32}
& \rho(c) = \left( \frac{ \pi^2 \cdot 2^{1/a} }{ 64 \cdot a } \right)^{1/3}
\cdot c^{1/3},
\end{align}
and that the integer $n(c)$ is defined via the formula
\begin{align}
n(c) = \text{\rm{floor}}(\nu(c)) + 1.
\label{eq_aa_33}
\end{align}
Suppose furthermore that, for any real $c \geq 1$, the sequence
$B_1(c), B_2(c), \dots, $ is defined via \eqref{eq_aa_1},
that the integers $q=q(c)$ and $p=p(c)$ are defined from
$\left\{ B_j(c) \right\}$ via \eqref{eq_sum_right_1}, \eqref{eq_sum_right_2},
and that the sequence $y_1(c),\dots,y_n(c)$ is defined
via \eqref{eq_err_decay_rec}.
Then,
\begin{align}
\label{eq_aa_34}
q(c) = \mu(c) \cdot (1+o(1)), \quad c \to \infty, \\
\label{eq_aa_35}
p(c) = \rho(c) \cdot (1+o(1)), \quad c \to \infty,
\end{align}
and also
\begin{align}
| y_{n(c)}(c) | \leq \varepsilon \cdot |y_{n(c)+1-q(c)}(c)| \cdot (1+o(1)),
\quad c \to \infty.
\label{eq_aa_36}
\end{align}
\label{thm_aaa}
\end{thm}
\begin{proof}
We observe that, due to \eqref{eq_aa_02}, \eqref{eq_sum_right_1},
\begin{align}
2 + 2 \cdot \left( \frac{\kappa(c)}{c} \right)^a - 
2 \cdot \left( \frac{n(c)-q(c)}{c} \right)^a = -2 + o(1), \quad c \to \infty,
\label{eq_aa_41}
\end{align}
and combine \eqref{eq_aa_41}, \eqref{eq_aa_02}, \eqref{eq_aa_03},
\eqref{eq_aa_30}, \eqref{eq_aa_31}, \eqref{eq_aa_33}  to obtain
\eqref{eq_aa_34}.
We combine \eqref{eq_aa_32}, \eqref{eq_aa_33}, \eqref{eq_aa_34},
\eqref{eq_sum_right_1}, \eqref{eq_sum_right_2} to obtain
\begin{align}
\frac{(n-q)^a - (n-q-p)^a}{c^a} & \; =
2 \left( 1 - \left( 1 - \frac{p}{c \cdot 2^{1/a}}\right)^a \right) \cdot
(1+o(1)) \nonumber \\
& \; =
\frac{\pi^2}{32 \cdot p^2} \cdot (1+o(1)), \quad c \to \infty.
\label{eq_aa_43}
\end{align}
We combine \eqref{eq_aa_43} with \eqref{eq_aa_32} to obtain 
\eqref{eq_aa_35}.
Next, for $j=1,\dots,q(c)$,
\begin{align}
B_{n-q+j} & \; = -2 \cdot \left(1 + 
\left( \frac{n(c)-q(c)}{c} \right)^a
\cdot \left( 
\left(1 + \frac{j}{n(c)-q(c)}\right)^a-1
\right)
\right) \nonumber \\
& \; = -2 \cdot \left(1 + \frac{2 \cdot a \cdot j}{2^{1/a} \cdot c} \right)
\cdot (1+o(1)), \quad c \to \infty,
\label{eq_aa_50}
\end{align}
and hence, similar to \eqref{eq_aa_5},
\begin{align}
& \prod_{j=1}^q \left(
\frac{B_{n-q+j}}{2} + \sqrt{ \left(\frac{B_{n-q+j}}{2}\right)^2-1 }
\right)
= \nonumber \\
& \exp \left((1+o(1)) \cdot 
\int_0^q \log\left(
1 + \sqrt{ \frac{4 \cdot a \cdot x}{2^{1/a} \cdot c}}
\right) \; dx
\right), \quad c \to \infty.
\label{eq_aa_51}
\end{align}
We observe that
\begin{align}
\int_0^1 \log(1 + Z \cdot \sqrt{s}) \; ds = \frac{2\cdot Z}{3}
\cdot (1+o(1)), \quad
Z \to 0,
\label{eq_aa_52}
\end{align}
and combine \eqref{eq_aa_51}, \eqref{eq_aa_51} and
Theorem~\ref{thm_vec_decay} in Section~\ref{sec_vectors} to obtain
\begin{align}
| y_n | \leq |y_{n-q+1}| \cdot \exp\left( - \frac{4}{3} \cdot
\sqrt{ \frac{a \cdot q^3}{2^{1/a} \cdot c} } \cdot (1+o(1))
\right), \quad c \to \infty,
\label{eq_aa_53}
\end{align}
and combine \eqref{eq_aa_30}, \eqref{eq_aa_34}, \eqref{eq_aa_53}
to obtain
\eqref{eq_aa_36}.
\end{proof}
The following theorem is a consequence of 
Theorems~\ref{thm_aa}, \ref{thm_aaa} above.
\begin{thm}
Suppose that $\varepsilon>0$ is the machine precision,
and that $a \geq 1$ and $1 \leq \tilde{\delta} < \delta$
are real numbers.
Suppose also that, for any real $c \geq 1$, we define
$\mu(c)$ via \eqref{eq_aa_30}, that $n(c)$ is an integer, that
\begin{align}
2^{1/a} \cdot c < n(c) < 2^{1/a} \cdot c + \mu(c) + 1,
\label{eq_ac_1}
\end{align}
that
the sequence $A_1(c), \dots, A_{n(c)}(c)$ is defined via the formula
\begin{align}
A_j(c) = 2 + 2 \cdot \left( \frac{j}{c} \right)^a,
\label{eq_ac_2}
\end{align}
for every $j=1,\dots,n(c)$, and the $n(c) \times n(c)$ matrix $A(c)$ 
is defined
from $\left\{ A_j(c) \right\}$ via \eqref{eq_running}.
Suppose also that, for any $c \geq 1$, the real number $\lambda(c)$
is an eigenvalue of $A(c)$, that $\delta(c)$ is
a real number, that
\begin{align}
1 < \tilde{\delta} < \delta(c) < \delta,
\label{eq_ac_3}
\end{align}
that
\begin{align}
\lambda(c) = 4 + 2 \cdot \left( \frac{\delta(c)}{c} \right)^{2a/(a+2)},
\label{eq_ac_4}
\end{align}
and that $X(c)=(X_1(c), \dots, X_n(c))^T$
is the unit-norm $\lambda(c)$-eigenvector of $A(c)$.
Suppose furthermore that, for any $c \geq 1$,
the quantities $A_j(c)- \lambda(c)$ are defined to precision $\varepsilon$,
for any $c \geq 1$ and every $j=1,\dots,n(c)$.
Then,
\begin{align}
|X_1(c)| < \exp\left( -\tilde{\delta} \cdot D_a \right)
\cdot (1+o(1)), \quad c \to \infty,
\label{eq_ac_5}
\end{align}
where $D_a$ is defined via \eqref{eq_aa_01}.
Also, if $a>1$, then
\begin{align}
\text{\rm{rel}}(X_1(c)) \leq 620 \cdot \delta^{(16-4a)/(3a+6)} 
\cdot c^{4a/(a+2)}
\cdot \varepsilon
\cdot (1+o(1)), \quad c \to \infty.
\label{eq_ac_6}
\end{align}
If $a=1$, then
\begin{align}
\text{\rm{rel}}(X_1(c)) \leq 960 \cdot \left(
\frac{\delta^{4/3}}{4} + \left(-\log \varepsilon\right)^{4/3} + 1
\right) \cdot c^{4/3} \cdot \varepsilon
\cdot (1+o(1)), \quad c \to \infty.
\label{eq_ac_7}
\end{align}
\label{thm_ac}
\end{thm}
\begin{proof}
Suppose that $c \geq 1$, and
that $k,l,p,q$ are defined from $A(c)$ via
\eqref{eq_sum_left_1}, \eqref{eq_sum_left_2}, \eqref{eq_sum_right_1},
\eqref{eq_sum_right_2}, respectively. If $a>1$, we combine \eqref{eq_ac_1},
\eqref{eq_ac_2}, \eqref{eq_ac_3}, \eqref{eq_ac_4}
with Theorems~\ref{thm_aa}, \ref{thm_aaa} above to obtain
\begin{align}
& 243 \cdot l^2 \cdot (k + 2 \cdot l)^2 = \nonumber \\
& 243 \cdot k^4 \cdot \alpha^2 \cdot (1 + 2\cdot \alpha)^2 < \nonumber \\
&
243 \cdot \left( \frac{\pi^2}{32 \cdot a} \right)^{2/3} \cdot
\left(1 + 2 \cdot \left( \frac{\pi^2}{32 \cdot a} \right) \right) \cdot
\delta^{8/(a+2)-4/3} \cdot c^{4a/(a+2)} <
\nonumber \\
& 620 \cdot \delta^{(4/3) \cdot (4-a)/(a+2)} \cdot
c^{4a/(a+2)}.
\label{eq_ac_10}
\end{align}
and combine \eqref{eq_ac_10} with Corollary~\ref{cor_glue} 
in Section~\ref{sec_about_errors} to obtain
\eqref{eq_ac_6}.
If $a=1$, then
we combine \eqref{eq_ac_1},
\eqref{eq_ac_2}, \eqref{eq_ac_3}, \eqref{eq_ac_4}
with Theorems~\ref{thm_aa}, \ref{thm_aaa} above to obtain
\begin{align}
& 243 \cdot \left( l^2 \cdot (k + 2 \cdot l)^2 + p^2 \cdot (q + 2 \cdot p)^2
\right) \leq \nonumber \\
& 243 \cdot c^{4/3} \cdot \left( \frac{\pi^2}{32} \right)^{2/3} \cdot \left(
\left( \delta^{2/3} + 2 \cdot \left( \frac{\pi^2}{32} \right)^{1/3}
\right)^2 + 
\left( (-3 \cdot \log \varepsilon)^{2/3} + 
2 \cdot \left( \frac{\pi^2}{32} \right)^{1/3}
\right)^2 \right) \nonumber \\
& 960 \cdot c^{4/3} \cdot
\left(
\frac{\delta^{4/3}}{4} + \left(-\log \varepsilon\right)^{4/3} +
1
\right),
\label{eq_ac_11}
\end{align}
and combine \eqref{eq_ac_11} with Corollary~\ref{cor_glue} 
in Section~\ref{sec_about_errors} to obtain
\eqref{eq_ac_7}. For any $a \geq 1$, the inequality
\eqref{eq_ac_5} follows now from \eqref{eq_aa_05}.
\end{proof}
\begin{remark}
The conclusions of Theorem~\ref{thm_ac} above hold even under a
milder assumption that each of $A_j(c)$ and $\lambda(c)$ separately
is defined
to relative precision $\varepsilon$ for every $j$
(and not necessarily their difference).
The related analysis (beyond the scope of this paper) is based on
Theorems~\ref{thm_summary_left}, \ref{thm_summary_right}
in Section~\ref{sec_about_errors}, and on 
the observation that when $\lambda(c) \approx A_j(c)$ what matters
is the absolute (and not relative) accuracy of $\lambda(c)-A_j(c)$.
\label{rem_dif}
\end{remark}

%%%%%%%%%%%%%%%%%%%%%%%%%%%%%%%%%%%%%%%%%%%%%
%%%%%%%%%%%%%%%%%%%%%%%%%%%%%%%%%%%%%%%%%%%%%
\section{Numerical Algorithms}
\label{sec_num_algo}
In this section, we describe several numerical algorithms
for the evaluation of the eigenvectors
of certain symmetric tridiagonal matrices.

\subsection{Problem Settings}
\label{sec_settings}
%
%In this subsection, we describe the problem we want to solve.

%{\bf Assumptions.}
Suppose that $n > 0$ is an integer,
that $2 < A_1 < A_2 < \dots$ is a sequence of positive
real numbers, that $A$ is an $n$ by $n$ symmetric tridiagonal matrix
defined via \eqref{eq_running} in Section~\ref{sec_vectors},
and that the real number $\lambda$ is an eigenvalue of $A$.

{\bf Task.} Evaluate the unit-length 
eigenvector 
\begin{align}
X=(X_1, \dots, X_n)\in\Rc^n
\label{eq_big_x}
\end{align}
of $A$ corresponding to $\lambda$.

{\bf Desired accuracy of the solution.}
We want the coordinates $X_j$ of $X$ to be evaluated to
high {\it relative} accuracy
(as opposed to {\it absolute} accuracy; see also Section~\ref{sec_intro}).

{\bf Observation.}
This task is potentially difficult if $|X_j|$ 
is small compared to $\| X \|=1$. For example, if $|X_1| < \varepsilon$,
where $\varepsilon$ is the machine precision 
(e.g. $\varepsilon \approx 10^{-16}$
for double-precision calculations),
it is not obvious why $X_1$ should be evaluated to any correct digit
at all (see also Section~\ref{sec_intro}).

{\bf Observation.} 
Due to Theorem~\ref{thm_main_rec} in Section~\ref{sec_vectors},
\begin{align}
X_{j-1} + \left(A_j-\lambda\right) \cdot X_j + X_{j+1} = 0,
\label{eq_algo_main_rec}
\end{align}
for every $j=2,\dots,n-1$. Qualitatively, the relation
between $X_{j-1}, X_j, X_{j+1}$ depends
on whether $\left(A_j-\lambda\right)$ is greater than 2, 
is less than -2, or is between -2 and 2 (see Section~\ref{sec_vectors}).

{\bf Assumption on $\lambda$.}
For the sake of clarity of presentation,
in the rest of this section we assume that the eigenvalue $\lambda$
satisfies the inequality
\begin{align}
2 + A_1 < \lambda < A_n - 2.
\label{eq_algo_lambda_ineq}
\end{align}
Clearly, the obvious simplification of the algorithm 
described below will handle any eigenvalue $\lambda$ of $A$.

\subsection{Informal Description of the Algorithm}
\label{sec_informal}
This section contains an informal description
of an algorithm for the evaluation
of $X=(X_1, \dots, X_n) \in \Rc^n$ 
(see \eqref{eq_big_x}).
On the other hand, Section~\ref{sec_short} below contains a
complete outline
of the steps of the algorithm.

Suppose that $1 < r < n$ is an integer, and that
\begin{align}
A_r \leq \lambda < A_{r+1}
\label{eq_algo_01}
\end{align}
(see \eqref{eq_sum_left_2}, \eqref{eq_sum_right_2}).
For any $\lambda-$eigenvector $x=(x_1, \dots, x_n)\in\Rc^n$
of $A$ and every $j=2,\dots,n-1$, the three consecutive coordinates 
$x_{j-1}, x_j, x_{j+1}$ satisfy the recurrence relation \eqref{eq_main_rec}
of Theorem~\ref{thm_main_rec} (see also \eqref{eq_algo_main_rec}
above).

We set $x_1=1$ and use \eqref{eq_main_rec} to iteratively
evaluate $x_2, \dots, x_{r+1}$ (e.g. "going forward").
Obviously, we have evaluated the first $r+1$ coordinates of $X$
up to a scaling constant. Next, we set $y_n=1$ and use
\eqref{eq_main_rec} to iteratively evaluate
$y_{n-1}, y_{n-2}, \dots, y_r$ (e.g. "going backward").
Again, this gives the last $n-r+1$ coordinates of $X$
up to a {\it different} scaling constant. The accuracy
of both evaluations is investigated in detail in 
Section~\ref{sec_about_errors}.

The indices of the two
sequences overlap at $j=r, r+1$. In exact arithmetic, the planar
vectors
$(x_r,x_{r+1})$ and $(y_r,y_{r+1})$ are linearly dependent
(see Theorem~\ref{thm_glue}
in Section~\ref{sec_about_errors}). We "glue the two sequences together" by
multiplying $y_r, \dots, y_n$ through by the correct scaling factor $s$;
in particular, $x_j = s \cdot y_j$ for $j=r,r+1$.
The resulting vector $z$ in $\Rc^n$ is a $\lambda-$eigenvector of $A$
(see Theorem~\ref{thm_glue}). We then normalize it to obtain $X$.

%%%%%%%%%%%%%%%%%
\subsection{Short Description of the Algorithm}
\label{sec_short}

Suppose that $n>0$ is an integer, that the $n$ by $n$ matrix $A$
is that from Section~\ref{sec_settings}, that $\lambda$
is an eigenvalue of $A$, and that
the integer $1 < r < n$ is 
defined via \eqref{eq_algo_01} above.

\vspace{0.1in}
{\bf Step A:} evaluation of the left coordinates of $X$
(see \eqref{eq_big_x}).

\vspace{0.1in}
{\bf 1.} Set $x_1 = 1$.

{\bf 2.} Compute $x_2$ via \eqref{eq_main_rec_first} 
         of Theorem~\ref{thm_main_rec}.

{\bf 3.} Compute $x_3, \dots, x_r, x_{r+1}$ iteratively 
         via \eqref{eq_main_rec} of Theorem~\ref{thm_main_rec}.

\vspace{0.2in}
{\bf Step B:} evaluation of the right coordinates of $X$.

\vspace{0.1in}
{\bf 1.} Set $y_n=1$.

{\bf 2.} Compute $y_{n-1}$ via \eqref{eq_main_rec_last}
         of Theorem~\ref{thm_main_rec}. 

{\bf 3.} Compute $y_{n-2}, \dots, y_{r+1}, y_r$ iteratively
         via \eqref{eq_main_rec} of Theorem~\ref{thm_main_rec}.

\vspace{0.2in}
{\bf Step C:} glue them together.

\vspace{0.1in}
{\bf 1.} Compute the real number 
$s$ via \eqref{eq_glue_2} in Theorem~\ref{thm_glue}.

{\bf 2.} Compute the vector $z=(z_1,\dots,z_n)$ via \eqref{eq_glue_3}
in Theorem~\ref{thm_glue}.

{\bf 3.} Compute the vector $X=(X_1,\dots,X_n)$ from $z$
via \eqref{eq_cor_glue_1} in Corollary~\ref{cor_glue}.

\vspace{0.1in}
{\bf Observation.} The vector $X \in \Rc^n$ is the unit-norm 
$\lambda-$eigenvector
of $A$ whose first coordinate is positive (see Corollary~\ref{cor_glue}
in Section~\ref{sec_about_errors}).

{\bf Running time.} Obviously, the running time of this algorithm is 
$O(n)$ operations, where $n$ is the dimensionality of the matrix.

%%%%%%%%%%%%%%%%%%%
\subsection{Accuracy}
\label{sec_error_analysis}
%%%
In Sections~\ref{sec_informal}, \ref{sec_short}, we described
an algorithm for the evaluation of the unit length $\lambda-$eigenvector
$X=(X_1, \dots, X_n)$ of $A$, whose first coordinate is positive.
The accuracy of this procedure is investigated in some detail
in Section~\ref{sec_about_errors} for a general tridiagonal matrix
with constant off-diagonal elements and monotone diagonal.
More specifically, the {\it relative} accuracy of various coordinates
is described in 
Theorems~\ref{thm_summary_left}, \ref{thm_summary_right},
\ref{thm_glue} and Corollary~\ref{cor_glue} in Section~\ref{sec_about_errors}.
For example, \eqref{eq_cor_glue_2} provides a bound on $\text{\rm{rel}}(X_1)$
in terms of the integers $1<k,l,p,q<n$ (defined
via \eqref{eq_sum_left_1}, \eqref{eq_sum_left_2},
\eqref{eq_sum_right_1}, \eqref{eq_sum_right_2}) and
the relative accuracy $\varepsilon$ of $\lambda-A_j$ for $j=1,\dots,n$
(see also Remark~\ref{rem_dif} in Section~\ref{sec_asym}).
We summarize the results of Section~\ref{sec_about_errors}
qualitatively in the following observations
(see also Section~\ref{sec_numerical} for related numerical experiments).

{\bf Observation 1.} For all $j$ such that $\lambda-A_j \geq 2$
(e.g. for $1 \leq j \leq k$ in the notation of Theorem~\ref{thm_summary_left}
in Section~\ref{sec_about_errors}),
the coordinates $X_j$ are evaluated to roughly the same {\it relative} accuracy,
{\it independent} of how small they are
(see e.g. Theorem~\ref{thm_err_grow_rel} 
in Section~\ref{sec_about_errors} and
 \eqref{eq_sum_left_3} in Theorem~\ref{thm_summary_left}).
These coordinates form a monotonically
increasing sequence (see Theorem~\ref{thm_vec_grow} 
in Section~\ref{sec_vectors} for an estimate on its growth).

{\bf Observation 2.} For all $j$ such that $\lambda-A_j \leq -2$
(e.g. for $n-q \leq j \leq n$ 
in the notation of Theorem~\ref{thm_summary_right}
in Section~\ref{sec_about_errors}),
the coordinates $X_j$ are evaluated to roughly the same {\it relative} accuracy,
{\it independent} of how small they are
(see e.g.
 \eqref{eq_sum_right_3} in Theorem~\ref{thm_summary_right}).
These coordinates form an alternating sequence, and
their absolute values form a monotonically
decreasing sequence (see Theorem~\ref{thm_vec_decay}
in Section~\ref{sec_vectors} 
for an estimate on its decay).

{\bf Observation 3.} For all $j$ such that 
$\lambda-2 \leq A_j \leq \lambda+2$ (e.g. for $k < j < n-q$ in
the notation of Theorems~\ref{thm_summary_left}, \ref{thm_summary_right})
in Section~\ref{sec_about_errors},
the coordinates $X_j$ are evaluated to roughly the same
{\it absolute} accuracy
(see e.g. \eqref{eq_osc_14} in Theorem~\ref{thm_osc},
\eqref{eq_sum_left_5}, \eqref{eq_sum_left_5a} in Theorem~\ref{thm_summary_left},
\eqref{eq_sum_right_5}, \eqref{eq_sum_right_5a} in 
Theorem~\ref{thm_summary_right}).
These coordinates vary in magnitude in a fairly moderate way
and exhibit an oscillatory behavior
(see e.g. Theorems~\ref{thm_mat}, \ref{thm_delta} and 
Corollaries~\ref{cor_mat}, \ref{cor_delta} 
in Section~\ref{sec_vectors},  
and also Section~\ref{sec_numerical}).

% {\bf Example.} Suppose that $a,c \geq 1$ are real numbers,
% and that the $n$ by $n$ matrix $A$ is that of Theorem~\ref{thm_ac}.
% In other words, the diagonal entries of $A$ are defined via
% \eqref{eq_ac_2}. 
% Roughly speaking, due to 
% Theorem~\ref{thm_ac},
% \begin{align}
% rel(X_1) \leq \varepsilon \cdot O\left( c^{4a/(a+2)} \right), 
% \quad c \to \infty.
% \label{eq_acc_example}
% \end{align}
% In other words, if, for example, $a=2$, then the first several
% coordinates $X_1, X_2, \dots$ of the eigenvector
% are expected to be evaluated correctly to all but
% the last $2 \cdot \log_{10}(c)$ digits
% (see also Theorem~\ref{thm_intro_principal} in Section~\ref{sec_intro}).
%
\begin{remark}
Extensive numerical experiments seem to indicate that the estimates
from Section~\ref{sec_about_errors}
 are somewhat pessimistic. In other words,
in practice the relative error tends to be smaller than our
estimates suggest (see also Section~\ref{sec_numerical}).
\label{rem_acc}
\end{remark}
\begin{remark}
It is somewhat surprising that,
according to \eqref{eq_cor_glue_3} in Corollary~\ref{cor_glue},
the relative error of, say, $X_1$
seems to be independent of the order of magnitude of $X_1$.
In particular, while $X_1$ can be fairly small
(see e.g. Theorem~\ref{thm_vec_grow} and
Corollary~\ref{cor_vec_grow} in Section~\ref{sec_vectors}), 
it still will be evaluated
to reasonable relative precision.
\label{rem_surprise}
\end{remark}
\begin{remark}
When the coordinates of the eigenvector are evaluated
via the three-terms recurrence \eqref{eq_main_rec},
the choice of direction plays a crucial role.
Roughly speaking,
this recurrence is unstable in the backward direction in the
region of growth, and is unstable in the forward direction
in the region of decay (see also Section~\ref{sec_about_errors}). 
As expected, the use of this recurrence relation in
a "wrong" direction leads to a disastrous loss of accuracy.
\label{rem_simplified_direction}
\end{remark}

%%%%%%%%%%%%%%%%%%%%%%%%%%%%%%%%%%%%%%%%%%%%
\subsection{Related Algorithms}
\label{sec_related}
In Section~\ref{sec_informal}, \ref{sec_short},
we presented an algorithm 
for accurate evaluation of the coordinates of the eigenvector $X$
(see \eqref{eq_big_x} in Section~\ref{sec_settings}).
In this section, we briefly discuss the accuracy of several
classical algorithms for the solution of the same problem.

%%%%%%%%%%%%%%
\subsubsection{Inverse Power}
\label{sec_algo_inverse_power}

The unit-length $\lambda-$eigenvector $X$ of $A$ can be obtained
via Inverse Power Method with Shifts
(see Section~\ref{sec_power} for more details). This method is iterative,
and, on each iteration, the approximation $x^{(k+1)}$ of $X$
is obtained from $x^{(k)}$ via solving the linear system
\begin{align}
\left( \lambda\cdot I - A \right) \cdot x^{(k+1)} = x^{(k)},
\label{eq_power_system}
\end{align}
and normalizing the solution.
We observe that this method also evaluates $\lambda$
(even though in Section~\ref{sec_settings} we assume that
$\lambda$ has already been evaluated).
On each iteration, we solve the linear system \eqref{eq_power_system}
by Gaussian elimination
(since $A$ is tridiagonal, each iteration
costs $O(n)$ operations; moreover, $O(1)$ iterations are required:
see Remark~\ref{rem_power} in Section~\ref{sec_power}).

The following conjecture about the accuracy of Inverse Power Method
is substantiated by extensive numerical experiments
(see Section~\ref{sec_numerical}).
\begin{conjecture}
Suppose that $\varepsilon>0$ is the machine precision
(e.g. $\varepsilon \approx 10^{-16}$
for double-precision calculations), and that the eigenvalue $\lambda$
of $A$ is defined to accuracy $\varepsilon$.
Suppose also that $\lambda-A_1>2$. Suppose furthermore that $K>0$ is an integer,
and that
\begin{align}
K > \frac{\log\left(|X_1|\right)}{\log(\varepsilon)} + 1,
\label{eq_inverse_k}
\end{align}
where $X=(X_1, \dots, X_n) \in \Rc^n$ is the unit-length
$\lambda-$eigenvector of $A$. Then, 
after $K$ iterations of Inverse Power Method,
$X_1$ is evaluated to high relative accuracy. More specifically,
this relative accuracy is roughly of the same order of magnitude as for
the algorithm described in Sections~\ref{sec_informal},
\ref{sec_short} (see also \eqref{eq_exp1_10}, \eqref{eq_exp3_10}
below).
\label{conj_inverse}
\end{conjecture}
\begin{remark}
The inequality \eqref{eq_inverse_k} reflects on the fact
that each iteration of Inverse Power Method can reduce the coordinates
of the approximation $x^{(k)}$ by a factor of at most $\varepsilon^{-1}$.
In other words, if $X_1 \approx 10^{-50}$, and, in the initial approximation,
$x^{(1)}_1 = O(1)$, then $x^{(4)}_1$ will already be of the same order
of magnitude as $X_1$, and $x^{(5)}$ will approximate $X_1$
to a high relative precision.
\label{rem_inverse_k}
\end{remark}

%%%%%%%%%%%%%%
\subsubsection{Jacobi Rotations}
\label{sec_algo_jacobi}
In the view of Section~\ref{sec_algo_inverse_power},
one might suspect that virtually any standard algorithm
would accurately solve the problem introduced in
Section~\ref{sec_settings}. In other words, one might suspect that
the small coordinates of $X$ in the region of growth and
the region of decay will be evaluated to high relative precision
by any reasonable algorithm that computes eigenvectors.

Unfortunately, this is emphatically not the case, and the accuracy
of the result strongly depends on the choice of the algorithm.
For example, the popular Jacobi Rotations
algorithm for the evaluation of the eigenvalues and eigenvectors of 
a symmetric matrix $A$
(see, for example, \cite{Dahlquist}, \cite{Golub}, 
\cite{Stoer_Bul}, \cite{Wilkinson})
typically evaluates the eigenvalues of $A$
fairly accurately. Moreover, the corresponding unit-length eigenvectors are
evaluated to high relative accuracy, in the sense 
of \eqref{eq_mat_intro_rel_acc} in Section~\ref{sec_intro}.
However, the $\emph{coordinates}$ of $X$ are typically evaluated
only to high $\emph{absolute}$ accuracy.
In particular,
the relative accuracy of small coordinates will typically be poor:
if, for example, $X_1 \approx 10^{-50}$,
its numerical approximation, produced by Jacobi Rotations,
will usually have no correct digits at all
(the latest statement is supported by extensive numerical evidence).

%%%%%%%%%%%%%%%%%%%
\subsubsection{Gaussian Elimination}
\label{sec_gaussian}
Another possible method to evaluate $X$ would be to solve
the linear system
\begin{align}
\left(\lambda \cdot I - A \right) \cdot X = 0,
\label{eq_gauss_system}
\end{align}
by means of Gaussian Elimination
(see, for example, \cite{Dahlquist}, \cite{Golub}, 
\cite{Stoer_Bul}, \cite{Wilkinson}).
Unfortunately, this method, in general,
fails to evaluate the small coordinates of $X$ with high
relative accuracy (see, however, Section~\ref{sec_algo_inverse_power},
where Gaussian Elimination is used several times,
as a step of Inverse Power Method).

%%%%%%%%%%%%%%%%%%%%%%%%%%%%%%%%%%%%%%%%%%%%%
\section{Applications}
\label{sec_applications}

In this section, we describe some applications of the algorithm
from Section~\ref{sec_num_algo}
to other computational problems.

\subsection{Bessel Functions}
\label{sec_mat_bessel}

Suppose that $x>0$ is a real number,
and that $m>0$ is an integer. 
Below we describe a connection between
the classical
algorithm for the evaluation of $J_0(x),J_{\pm 1}(x),\dots,J_{\pm}(x)$
from Section~\ref{sec_num_bessel}
and the scheme from Section~\ref{sec_short}.

Suppose that $N>m$ is an integer (see Remark~\ref{rem_bessel_n}
in Section~\ref{sec_num_bessel}),
that the symmetric tridiagonal $(2N+1) \times (2N+1)$ 
matrix $A=A(x)$ is that
of
Theorem~\ref{thm_ac} in Section~\ref{sec_asym} with $a=1$ and $c=x$
(see \eqref{eq_ac_2}),
and that the real number $\lambda$
is defined via the formula
\begin{align}
\lambda = 2 + \frac{2 \cdot (N+1)}{x}.
\label{eq_bessel_num_lambda}
\end{align}
In the notation of Section~\ref{sec_num_bessel}, $\lambda$ is
an eigenvalue of $A$, and the corresponding unit-length eigenvector
$X$ is precisely
\begin{align}
X = \frac{1}{d} \cdot \left(
\tilde{J}_N, \dots, 
\tilde{J}_1, \tilde{J}_0, -\tilde{J}_1, \dots, (-1)^N \cdot \tilde{J}_N
\right).
\label{eq_bessel_num_vec}
\end{align}
In addition, the evaluation $\tilde{J}_0,\dots,\tilde{J}_N$ via the 
scheme described in Section~\ref{sec_num_bessel} 
(see \eqref{eq_bessel_d})
is
essentially identical to the evaluation of $X$ in \eqref{eq_bessel_num_vec}
via the algorithm from Section~\ref{sec_short}.

We conclude that the accuracy of this evaluation
has been analyzed in Theorems~\ref{thm_aa}, \ref{thm_aaa} in
Section~\ref{sec_asym},
and, despite the scheme being classical, this analysis
appears to be new (see \eqref{eq_ac_7} in Theorem~\ref{thm_ac}
in Section~\ref{sec_asym}
and Conjecture~\ref{conj_exp1} in Section~\ref{sec_intro},
as well as Section~\ref{sec_mat_exp3} for the related numerical experiments).

%%%%%%%%%%%%%%%%%%%%%%%%%%%%%%
\subsection{Prolate Spheroidal Wave Functions}
\label{sec_mat_pswf_num}

Suppose that $c>0$ is a real number, and that the integral operator
$F_c:L^2[-1,1]\to[-1,1]$ is defined via the formula
\begin{align}
F_c[\varphi](x) = \int_{-1}^1 \varphi(t) \cdot e^{icxt} \; dt.
\label{eq_mat_pswf_fc}
\end{align}
Suppose also that the complex numbers $\lambda_0(c),\lambda_1(c),\dots$
are the eigenvalues of $F_c$ (ordered such that 
$|\lambda_0(c)| > |\lambda_1(c)| > \dots$).
The prolate spheroidal wave functions
(PSWFs)
corresponding to the band limit $c$ 
are the unit-norm eigenfunctions $\psi_0^{(c)}, \psi_1^{(c)}, \dots $ 
of $F_c$
(see e.g. \cite{RokhlinXiaoProlate}, 
\cite{LandauWidom},
\cite{ProlateSlepian1},
\cite{ProlateLandau1},
\cite{prol_book}).

It turns out that, for any $n \geq 0$, the eigenvalue $\lambda_n(c)$
can be evaluated at $O(1)$ operations from the first coordinate
of the unit-length eigenvector corresponding to a certain
eigenvalue of a symmetric tridiagonal matrix $A(c)$; moreover,
this matrix is essentially a perturbed version of
the matrix from Theorem~\ref{thm_ac} in Section~\ref{sec_asym},
with $a=2$ (see e.g. \cite{Report5ACHA}, \cite{prol_book}
for more details).

In particular, the algorithm of 
Sections~\ref{sec_informal}, \ref{sec_short},
with obvious minor modifications,
is applicable to the task of evaluating $\lambda_n(c)$
numerically with high relative accuracy
(even when $|\lambda_n(c)| < \varepsilon$, where $\varepsilon>0$
is the machine precision).
Moreover, the error analysis of such evaluation,
in a somewhat more general form,
has been carried
out in Theorems~\ref{thm_aa}, \ref{thm_aaa}, \ref{thm_ac} in
Section~\ref{sec_asym}
(see also Corollary~\ref{cor_glue} in Section~\ref{sec_about_errors}).

In Section~\ref{sec_numerical}, we present
several related numerical examples.
For the results of additional numerical experiments,
see, for example, \cite{Report5ACHA}.

%%%%%%%%%%%%%%%%%%%%%%%%%%%%%%%%%%%%%%%%%%%%%
%%%%%%%%%%%%%%%%%%%%%%%%%%%%%%%%%%%%%%%%%%%%%
%%%%%%%%%%%%%%%%%%%%%%%%%%%%%%%%%%%%%%%%%%%%%
\section{Numerical Results}
\label{sec_numerical}
In this section,
we illustrate the analysis of Section~\ref{sec_analytical}
via several numerical experiments. All the calculations were
implemented in FORTRAN (the Lahey 95 LINUX version), and were
carried out in
double precision. In addition, extended precision calculations
were used to estimate the accuracy of double precision
calculations.

\subsection{Experiment 1.}
\label{sec_mat_exp1}
In this experiment, we illustrate the performance of the
algorithm on certain matrices. 

{\bf Description.}
We first choose, more or less arbitrarily,
the real numbers $a,\delta \geq 0$. Then, for each choice
of five different values $c=10^2,10^3,10^4,10^5,10^6$,
we proceed as follows. We define the integer $n=n(c)$ via
\eqref{eq_aa_33} in Theorem~\ref{thm_aaa}, 
define $A_1,\dots,A_n$ via \eqref{eq_ac_2} in Theorem~\ref{thm_ac},
and then define
the symmetric tridiagonal $n \times n$ matrix $A=A(c)$ via
\eqref{eq_running}. Then, we define the real number $\tilde{\lambda}$
via the formula
\begin{align}
\tilde{\lambda} = 4 + 2 \cdot \left( \frac{\delta}{c} \right)^{2a/(a+2)},
\label{eq_exp1_01}
\end{align}
(see \eqref{eq_ac_4} in Theorem~\ref{thm_ac}), and find the closest
eigenvalue $\lambda(c)$ of $A(c)$ by Shifted Inverse Power method,
using $\tilde{\lambda}$ as the initial approximation to $\lambda(c)$
(see Section~\ref{sec_power}). We then compute $\delta(c)$ from $\lambda(c)$
via \eqref{eq_ac_4}.

Next, we obtain the unit-length $\lambda(c)$-eigenvector of $A$ by four
different methods:

{\bf 1.} $Y=(Y_1,\dots,Y_n)$ via 30 iterations of
Shifted Inverse Power, in double precision.

{\bf 2.} $X=(X_1,\dots,X_n)$ via the algorithm from Section~\ref{sec_short},
in double precision.

{\bf 3.} $\hat{Y}=(\hat{Y}_1,\dots,\hat{Y}_n)$ 
via 30 iterations of Shifted Inverse Power, in extended precision
(we also recompute the eigenvalue $\hat{\lambda}(c)$ in extended precision).

{\bf 4.} $\hat{X}=(\hat{X}_1,\dots,\hat{X}_n)$ 
via the algorithm from Section~\ref{sec_short},
in extended precision.

We verify that each of $\hat{X}$ and $\hat{Y}$ satisfies the definition of an
eigenvector coordinate-wise 
to at least 17 decimal digits, 
and also that $\hat{X}=\hat{Y}$ to at least 17 decimal digits.
In other words, each of $\hat{X}, \hat{Y}$ is the unit-length
$\lambda(c)-$eigenvector of $A$ defined to full double precision.
We use this observation to evaluate the relative and absolute
errors of $X_j, Y_j$, for every $j=1,\dots,n$.

For every $a=1,2,3,4,6$, we repeat this procedure for ten
different values of $\delta$ between 50 and 200.

{\bf Tables and Figures.}
The results of the experiment are displayed in 
Tables~\ref{t:test513a2_delta50}--\ref{t:test513a4_delta150}.
Each of these tables corresponds to a particular choice
of $a$ and $\delta$, and has the following structure.
Each of five columns corresponds to a different value of $c$,
between $10^2$ and $10^6$. The first three rows contain
$c$, the matrix size $n$, and the index $k$ (such that 
$A_k \approx \lambda(c)-2$: see 
\eqref{eq_sum_left_1} in Theorem~\ref{thm_summary_left}
for the precise definition).
The next two rows contain the eigenvalue $\lambda(c)$ and the
related real number
$\delta(c)$ (see \eqref{eq_exp1_01}). The next two rows contain
the coordinates $X_1$ and $X_k$.
The next two rows contain the relative accuracy of $X_1$ and $Y_1$.
The last two rows contain the maximal absolute accuracy
among all coordinates of $X, Y$, respectively.

%%%%%%%%%%%
\begin{table}[htbp]
\begin{center}
\begin{tabular}{c|c|c|c|c|c}
$c$ & $10^2$ & $10^3$ & $10^4$ & $10^5$ & $10^6$ \\
$n$  & 180 & 1,497 & 14,320 & 141,803 & 1,415,035 \\
$k$  & 71 & 226 & 706 & 2,244 & 7,109 \\
\hline
$\lambda$  & 0.50164E+01 & 0.41021E+01 & 0.40099E+01 & 0.40010E+01 & 0.40000E+01 \\
$\delta(c) $ & 0.50826E+02 & 0.51086E+02 & 0.49906E+02 & 0.50379E+02 & 0.50551E+02 \\
\hline
$X_1$  & 0.19744E-24 & 0.46025E-26 & 0.21813E-26 & 0.20152E-27 & 0.26903E-28 \\
$X_k$  & 0.12621E+00 & 0.60020E-01 & 0.28690E-01 & 0.14439E-01 & 0.73972E-02 \\
\hline
$\text{\rm{rel}}(X_1)$  & 0.19302E-13 & 0.26421E-12 & 0.43114E-11 & 0.13247E-10 & 0.11171E-09 \\
$\text{\rm{rel}}(Y_1)$  & 0.55816E-14 & 0.24161E-13 & 0.55651E-12 & 0.68590E-11 & 0.30212E-10 \\
$\underset{j}{\max}|X_j-\hat{X}_j|$  & 0.17885E-14 & 0.10874E-13 & 0.81497E-13 & 0.11156E-12 & 0.48541E-12 \\
$\underset{j}{\max}|Y_j-\hat{Y}_j|$  & 0.47183E-15 & 0.62991E-15 & 0.86371E-14 & 0.56234E-13 & 0.13395E-12 \\
\end{tabular}
\end{center}
\caption{\it
Experiment 1. Parameters: $a=2$, $\delta=50$.
}
\label{t:test513a2_delta50}
\end{table}
%%%%%%%%%%%
%%%%%%%%%%%
\begin{table}[htbp]
\begin{center}
\begin{tabular}{c|c|c|c|c|c}
$c$ & $10^2$ & $10^3$ & $10^4$ & $10^5$ & $10^6$ \\
$n$  & 180 & 1,497 & 14,320 & 141,803 & 1,415,035 \\
$k$  & 101 & 315 & 1,004 & 3,180 & 9,992 \\
\hline
$\lambda$  & 0.60503E+01 & 0.41993E+01 & 0.40201E+01 & 0.40019E+01 & 0.40002E+01 \\
$\delta(c) $ & 0.10251E+03 & 0.99703E+02 & 0.10087E+03 & 0.10118E+03 & 0.99842E+02 \\
\hline
$X_1$  & 0.29706E-47 & 0.33654E-49 & 0.73691E-51 & 0.77717E-52 & 0.54741E-52 \\
$X_k$  & 0.13199E+00 & 0.56585E-01 & 0.28214E-01 & 0.14026E-01 & 0.71872E-02 \\
\hline
$\text{\rm{rel}}(X_1)$  & 0.14729E-13 & 0.20625E-12 & 0.14676E-11 & 0.40918E-10 & 0.46459E-10 \\
$\text{\rm{rel}}(Y_1)$  & 0.47051E-14 & 0.39500E-13 & 0.57239E-12 & 0.68254E-11 & 0.32697E-10 \\
$\underset{j}{\max}|X_j-\hat{X}_j|$  & 0.11519E-14 & 0.79096E-14 & 0.21711E-13 & 0.30486E-12 & 0.17340E-12 \\
$\underset{j}{\max}|Y_j-\hat{Y}_j|$  & 0.78063E-15 & 0.75123E-15 & 0.77475E-14 & 0.52657E-13 & 0.12385E-12 \\
\end{tabular}
\end{center}
\caption{\it
Experiment 1. Parameters: $a=2$, $\delta=100$.
}
\label{t:test513a2_delta100}
\end{table}
%%%%%%%%%%%
%%%%%%%%%%%
\begin{table}[htbp]
\begin{center}
\begin{tabular}{c|c|c|c|c|c}
$c$ & $10^2$ & $10^3$ & $10^4$ & $10^5$ & $10^6$ \\
$n$  & 180 & 1,497 & 14,320 & 141,803 & 1,415,035 \\
$k$  & 123 & 389 & 1,227 & 3,875 & 12,296 \\
\hline
$\lambda$  & 0.70491E+01 & 0.43029E+01 & 0.40301E+01 & 0.40030E+01 & 0.40003E+01 \\
$\delta(c) $ & 0.15244E+03 & 0.15146E+03 & 0.15076E+03 & 0.15021E+03 & 0.15121E+03 \\
\hline
$X_1$  & 0.24360E-68 & 0.10108E-73 & 0.79506E-75 & 0.19809E-75 & 0.10325E-76 \\
$X_k$  & 0.14129E+00 & 0.59531E-01 & 0.27646E-01 & 0.13861E-01 & 0.70498E-02 \\
\hline
$\text{\rm{rel}}(X_1)$  & 0.57053E-14 & 0.39666E-12 & 0.31336E-13 & 0.28896E-10 & 0.16840E-09 \\
$\text{\rm{rel}}(Y_1)$  & 0.29582E-14 & 0.44484E-13 & 0.58518E-12 & 0.69078E-11 & 0.32768E-10 \\
$\underset{j}{\max}|X_j-\hat{X}_j|$  & 0.64401E-15 & 0.14322E-13 & 0.88880E-14 & 0.19695E-12 & 0.58894E-12 \\
$\underset{j}{\max}|Y_j-\hat{Y}_j|$  & 0.30530E-15 & 0.81206E-15 & 0.75181E-14 & 0.50491E-13 & 0.11008E-12 \\
\end{tabular}
\end{center}
\caption{\it
Experiment 1. Parameters: $a=2$, $\delta=150$.
}
\label{t:test513a2_delta150}
\end{table}
%%%%%%%%%%%
%%%%%%%%%%%
\begin{table}[htbp]
\begin{center}
\begin{tabular}{c|c|c|c|c|c}
$c$ & $10^2$ & $10^3$ & $10^4$ & $10^5$ & $10^6$ \\
$n$  & 148 & 1,251 & 12,025 & 119,207 & 1,189,823 \\
$k$  & 80 & 371 & 1,725 & 8,052 & 37,584 \\
\hline
$\lambda$  & 0.48307E+01 & 0.40378E+01 & 0.40018E+01 & 0.40000E+01 & 0.40000E+01 \\
$\delta(c) $ & 0.51745E+02 & 0.51066E+02 & 0.51375E+02 & 0.52214E+02 & 0.53092E+02 \\
\hline
$X_1$  & 0.15657E-27 & 0.56925E-29 & 0.34307E-30 & 0.11988E-31 & 0.40486E-33 \\
$X_k$  & 0.16156E+00 & 0.70686E-01 & 0.31217E-01 & 0.14289E-01 & 0.65824E-02 \\
\hline
$\text{\rm{rel}}(X_1)$  & 0.24916E-13 & 0.17710E-12 & 0.82978E-11 & 0.45445E-09 & 0.39497E-08 \\
$\text{\rm{rel}}(Y_1)$  & 0.50118E-14 & 0.40247E-13 & 0.15850E-11 & 0.24346E-10 & 0.10033E-09 \\
$\underset{j}{\max}|X_j-\hat{X}_j|$  & 0.14710E-14 & 0.50368E-14 & 0.85255E-13 & 0.21667E-11 & 0.85706E-11 \\
$\underset{j}{\max}|Y_j-\hat{Y}_j|$  & 0.53949E-15 & 0.14180E-14 & 0.15365E-13 & 0.11264E-12 & 0.27496E-12 \\
\end{tabular}
\end{center}
\caption{\it
Experiment 1. Parameters: $a=4$, $\delta=50$.
}
\label{t:test513a4_delta50}
\end{table}
%%%%%%%%%%%
%%%%%%%%%%%
\begin{table}[htbp]
\begin{center}
\begin{tabular}{c|c|c|c|c|c}
$c$ & $10^2$ & $10^3$ & $10^4$ & $10^5$ & $10^6$ \\
$n$  & 149 & 1,251 & 12,025 & 119,207 & 1,189,823 \\
$k$  & 99 & 468 & 2,160 & 10,074 & 46,353 \\
\hline
$\lambda$  & 0.59504E+01 & 0.40964E+01 & 0.40044E+01 & 0.40002E+01 & 0.40000E+01 \\
$\delta(c) $ & 0.98136E+02 & 0.10293E+03 & 0.10085E+03 & 0.10226E+03 & 0.99596E+02 \\
\hline
$X_1$  & 0.65592E-50 & 0.16890E-56 & 0.13441E-56 & 0.22928E-58 & 0.60367E-58 \\
$X_k$  & 0.16663E+00 & 0.69229E-01 & 0.31413E-01 & 0.14323E-01 & 0.65935E-02 \\
\hline
$\text{\rm{rel}}(X_1)$  & 0.36733E-13 & 0.11611E-12 & 0.64777E-11 & 0.56745E-10 & 0.58871E-08 \\
$\text{\rm{rel}}(Y_1)$  & 0.17734E-13 & 0.71711E-13 & 0.15602E-11 & 0.24704E-10 & 0.12500E-09 \\
$\underset{j}{\max}|X_j-\hat{X}_j|$  & 0.20053E-14 & 0.38650E-14 & 0.58993E-13 & 0.24506E-12 & 0.11594E-10 \\
$\underset{j}{\max}|Y_j-\hat{Y}_j|$  & 0.88124E-15 & 0.11261E-14 & 0.14018E-13 & 0.10435E-12 & 0.30450E-12 \\
\end{tabular}
\end{center}
\caption{\it
Experiment 1. Parameters: $a=4$, $\delta=100$.
}
\label{t:test513a4_delta100}
\end{table}
%%%%%%%%%%%
%%%%%%%%%%%
\begin{table}[htbp]
\begin{center}
\begin{tabular}{c|c|c|c|c|c}
$c$ & $10^2$ & $10^3$ & $10^4$ & $10^5$ & $10^6$ \\
$n$  & 148 & 1,251 & 12,025 & 119,207 & 1,189,823 \\
$k$  & 115 & 535 & 2,472 & 11,446 & 53,300 \\
\hline
$\lambda$  & 0.75092E+01 & 0.41649E+01 & 0.40075E+01 & 0.40003E+01 & 0.40000E+01 \\
$\delta(c) $ & 0.15244E+03 & 0.15386E+03 & 0.15112E+03 & 0.14999E+03 & 0.15141E+03 \\
\hline
$X_1$  & 0.28839E-74 & 0.19053E-83 & 0.17930E-83 & 0.66661E-84 & 0.11235E-85 \\
$X_k$  & 0.19676E+00 & 0.68972E-01 & 0.31725E-01 & 0.14354E-01 & 0.66008E-02 \\
\hline
$\text{\rm{rel}}(X_1)$  & 0.76598E-14 & 0.18105E-12 & 0.89870E-11 & 0.47429E-09 & 0.44710E-08 \\
$\text{\rm{rel}}(Y_1)$  & 0.76598E-14 & 0.67667E-13 & 0.13524E-11 & 0.23468E-10 & 0.14141E-09 \\
$\underset{j}{\max}|X_j-\hat{X}_j|$  & 0.25396E-14 & 0.53898E-14 & 0.80525E-13 & 0.19056E-11 & 0.81957E-11 \\
$\underset{j}{\max}|Y_j-\hat{Y}_j|$  & 0.24146E-14 & 0.10780E-14 & 0.12499E-13 & 0.99000E-13 & 0.22676E-12 \\
\end{tabular}
\end{center}
\caption{\it
Experiment 1. Parameters: $a=4$, $\delta=150$.
}
\label{t:test513a4_delta150}
\end{table}
%%%%%%%%%%%
%%%%%%%%%%%
\begin{table}[htbp]
\begin{center}
\begin{tabular}{c|c|c|c|c|c}
$a$  & 1 & 2 & 3 & 4 & 6 \\
\hline
$\beta_Y(a)$  & 0.791E+00 & 0.104E+01 & 0.103E+01 & 0.109E+01 & 0.110E+01 \\
\hline
$\beta_X(a)$  & 0.586E+00 & 0.101E+01 & 0.115E+01 & 0.131E+01 & 0.146E+01 \\
$\beta(a)$  & 0.666E+00 & 0.100E+01 & 0.119E+01 & 0.133E+01 & 0.150E+01 \\
$4a/(a+2)$  & 0.133E+01 & 0.200E+01 & 0.239E+01 & 0.266E+01 & 0.300E+01 \\
\end{tabular}
\end{center}
\caption{\it
Experiment 1. Best fit slopes of $\log_{10}(\text{\rm{rel}}(Y_1))$,
$\log_{10}(\text{\rm{rel}}(X_1))$
as functions of $\log_{10}(c)$.
}
\label{t:test519}
\end{table}
%%%%%%%%%%%
%%%%%%%%%%%
\begin{table}[htbp]
\begin{center}
\begin{tabular}{c|c|c|c|c|c}
$c$ & $10^2$ & $10^3$ & $10^4$ & $10^5$ & $10^6$ \\
$m$  & 162 & 1,135 & 10,292 & 100,629 & 1,001,357 \\
$N$  & 192 & 1,175 & 10,392 & 100,829 & 1,001,757 \\
\hline
$J_m(c)$  & 0.13298E-20 & 0.11471E-21 & 0.32071E-22 & 0.14301E-22 & 0.59576E-23 \\
$J_c(c)$  & 0.96366E-01 & 0.44730E-01 & 0.20762E-01 & 0.96369E-02 & 0.44730E-02 \\
\hline
$|1-X_m/J_m(c)|$  & 0.33801E-13 & 0.15085E-12 & 0.24630E-12 & 0.22284E-11 & 0.77524E-11 \\
$|1-Y_m/J_m(c)|$  & 0.36770E-14 & 0.22545E-13 & 0.14788E-12 & 0.98237E-12 & 0.24681E-11 \\
\end{tabular}
\end{center}
\caption{\it
Experiment 3. Parameters: $\delta=50$.
}
\label{t:test515_delta50}
\end{table}
%%%%%%%%%%%
%%%%%%%%%%%
\begin{table}[htbp]
\begin{center}
\begin{tabular}{c|c|c|c|c|c}
$c$ & $10^2$ & $10^3$ & $10^4$ & $10^5$ & $10^6$ \\
$m$  & 200 & 1,215 & 10,464 & 101,000 & 1,002,154 \\
$N$  & 230 & 1,255 & 10,564 & 101,200 & 1,002,554 \\
\hline
$J_m(c)$  & 0.20593E-40 & 0.61117E-42 & 0.10612E-42 & 0.39770E-43 & 0.18323E-43 \\
$J_c(c)$  & 0.96366E-01 & 0.44730E-01 & 0.20762E-01 & 0.96369E-02 & 0.44730E-02 \\
\hline
$|1-X_m/J_m(c)|$  & 0.38368E-14 & 0.13658E-12 & 0.20091E-11 & 0.10091E-11 & 0.56160E-11 \\
$|1-Y_m/J_m(c)|$  & 0.28466E-14 & 0.93836E-14 & 0.14805E-12 & 0.11176E-11 & 0.29720E-11 \\
\end{tabular}
\end{center}
\caption{\it
Experiment 3. Parameters: $\delta=100$.
}
\label{t:test515_delta100}
\end{table}
%%%%%%%%%%%
%%%%%%%%%%%
\begin{table}[htbp]
\begin{center}
\begin{tabular}{c|c|c|c|c|c}
$c$ & $10^2$ & $10^3$ & $10^4$ & $10^5$ & $10^6$ \\
$m$  & 231 & 1,282 & 10,608 & 101,310 & 1,002,823 \\
$N$  & 261 & 1,322 & 10,708 & 101,510 & 1,003,223 \\
\hline
$J_m(c)$  & 0.25898E-59 & 0.45624E-62 & 0.42252E-63 & 0.13902E-63 & 0.57054E-64 \\
$J_c(c)$  & 0.96366E-01 & 0.44730E-01 & 0.20762E-01 & 0.96369E-02 & 0.44730E-02 \\
\hline
$|1-X_m/J_m(c)|$  & 0.72561E-14 & 0.28169E-12 & 0.13717E-12 & 0.72506E-12 & 0.25122E-10 \\
$|1-Y_m/J_m(c)|$  & 0.64024E-15 & 0.28275E-13 & 0.12375E-12 & 0.13185E-11 & 0.38545E-11 \\
\end{tabular}
\end{center}
\caption{\it
Experiment 3. Parameters: $\delta=150$.
}
\label{t:test515_delta150}
\end{table}
%%%%%%%%%%%

Also, in Figures~\ref{fig:519a_x}, \ref{fig:519a_y} we plot the relative
errors of $X_1,Y_1$, respectively, on a logarithmic scale
as functions of $\log_{10}(c)$. More specifically, each of
Figures~\ref{fig:519a_x}, \ref{fig:519a_y} contains
five plots of such errors, corresponding to $a=1,2,3,4,6$, respectively.
Each point on such plot is the geometric mean of ten relative
errors (corresponding to ten different values of $\delta$
between 50 and 200). For example, to generate
plots corresponding to $a=2$ in Figure~\ref{fig:519a_x}, we use the data from 
Tables~\ref{t:test513a2_delta50}--\ref{t:test513a2_delta150}
(as well as the data corresponding to seven other values of $\delta$).

To each plot in Figures~\ref{fig:519a_x}, \ref{fig:519a_y},
one can fit a line (in the least square sense). The slopes of such lines
are displayed in Table~\ref{t:test519}. This table has the following
structure. Each column corresponds to a different value of $a$.
Second row contains the slopes corresponding to $\text{\rm{rel}}(Y_1)$
(see Figure~\ref{fig:519a_y}).
Third row contains the slopes corresponding to $\text{\rm{rel}}(X_1)$
(see Figure~\ref{fig:519a_x}).
Fourth row contains $\beta(a)$, where $\beta(a)$ is defined
via
\eqref{eq_exp3_11} below (the values in third and fourth rows
would be identical if $\text{\rm{rel}}(X_1)$ were proportional
to $c^{\beta(a)}$).
Last row contains the number $4 \cdot a/(a+2)$
(the power of $c$ in \eqref{eq_ac_6} of Theorem~\ref{thm_ac}).

{\bf Observations.} Several observations can be made
from Tables~\ref{t:test513a2_delta50}--\ref{t:test513a4_delta150},
Figure~\ref{fig:519a}, Table~\ref{t:test519},
and some additional numerical experiments by the author.

{\bf Observation 1.} 
For every choice of parameters in Experiment 1, the coordinate $X_1$
is fairly small compared to $X_k$, as predicted by Theorem~\ref{thm_vec_grow}
and Corollary~\ref{cor_vec_grow}
in Section~\ref{sec_vectors}
(for all $c$,
$X_1/X_k \approx 10^{-25}, 10^{-50}, 10^{-75}$ for
$\delta = 50,100,150$, respectively).
Despite this fact, both $X_1$ and $Y_1$
are still evaluated to fairly high {\it relative} accuracy,
in all cases.

{\bf Observation 3.} For any $c$ and $a$, 
the relative accuracy of both $X_1$ and $Y_1$ seems to
be essentially independent of their magnitude. For example, for $a=4$
and $c=10^6$, the relative accuracy of $X_1$ is 0.4E-8, 0.6E-8, 0.4E-8
for $\delta=50,100,150$, respectively (despite the fact that $X_1$ itself
is equal to 0.4E-33, 0.6E-58, 0.1E-85, respectively).
In other words, the $\delta$-dependent factor in 
\eqref{eq_ac_6} of Theorem~\ref{thm_ac} seems to be an 
artifact of the analysis.

{\bf Observation 4.} On the other hand, the relative accuracy of both $X_1$ and
$Y_1$ does depend on $c$ (as Theorem~\ref{thm_ac} suggests).
In particular, for any fixed $a$, the relative error of $Y_1$ seems to 
be roughly proportional to $c$, e.g.
\begin{align}
\text{\rm{rel}}(Y_1) = O(c) \cdot \varepsilon,
\label{eq_exp1_10}
\end{align}
where $\varepsilon$ is the machine precision (see second row
in Table~\ref{t:test519}).
 
{\bf Observation 5.} For any fixed $a$, the relative error of $X_1$ seems to 
be roughly proportional to $c^\beta$, where $\beta=\beta(a)$ 
is defined via the formula
\begin{align}
\beta(a) = \frac{2 \cdot a}{a+2}
\label{eq_exp3_11}
\end{align} 
(see third and fourth rows in Table~\ref{t:test519},
and also Conjecture~\ref{conj_exp1}).
On the other hand,
in Theorem~\ref{thm_ac} in Section~\ref{sec_asym} 
we derived a certain upper bound 
on the relative error of $X_1$ (see \eqref{eq_ac_6}
and last row in Table~\ref{t:test519});
this bound is proportional to $c^{4a/(a+2)}$. 
In other words, numerical experiments seem to indicate that
Theorem~\ref{thm_ac} overestimates the number of lost digits
roughly by a factor of two. For example, for $a=4$, $\delta=150$ and $c=10^6$
(see last column in 
Table~\ref{t:test513a4_delta150}) we lose almost $\beta(a) \cdot 6 = 8$
decimal digits, while the pessimistic estimate from Theorem~\ref{thm_ac}
suggest that we will lose 16 decimal digits. In other words,
the estimate from Theorem~\ref{thm_ac} is overly cautious.

%%%%%%%%%%%%%%%%%%
\begin{figure}[ht]
\centering
\subfigure[$\log_{10}(\text{\rm{rel}}(X_1))$ as a function of $\log_{10}(c)$.]{
\includegraphics[width=11.5cm, bb=81 227 529 564, clip=true]
{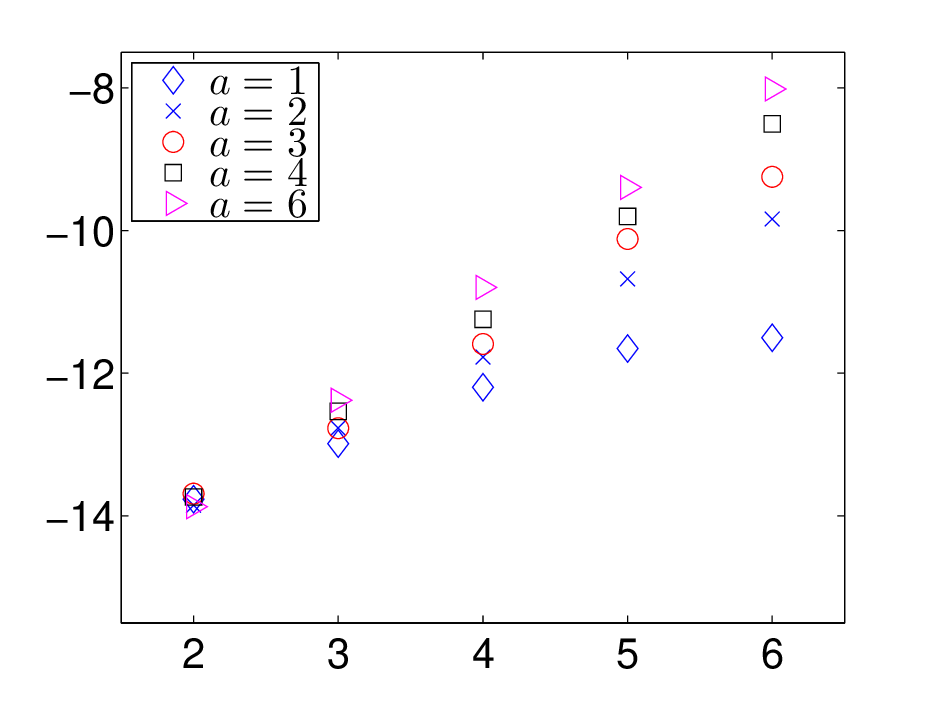}
\label{fig:519a_x}
}
\subfigure[$\log_{10}(\text{\rm{rel}}(Y_1))$ as a function of $\log_{10}(c)$.]{
\includegraphics[width=11.5cm, bb=81 227 529 564, clip=true]
{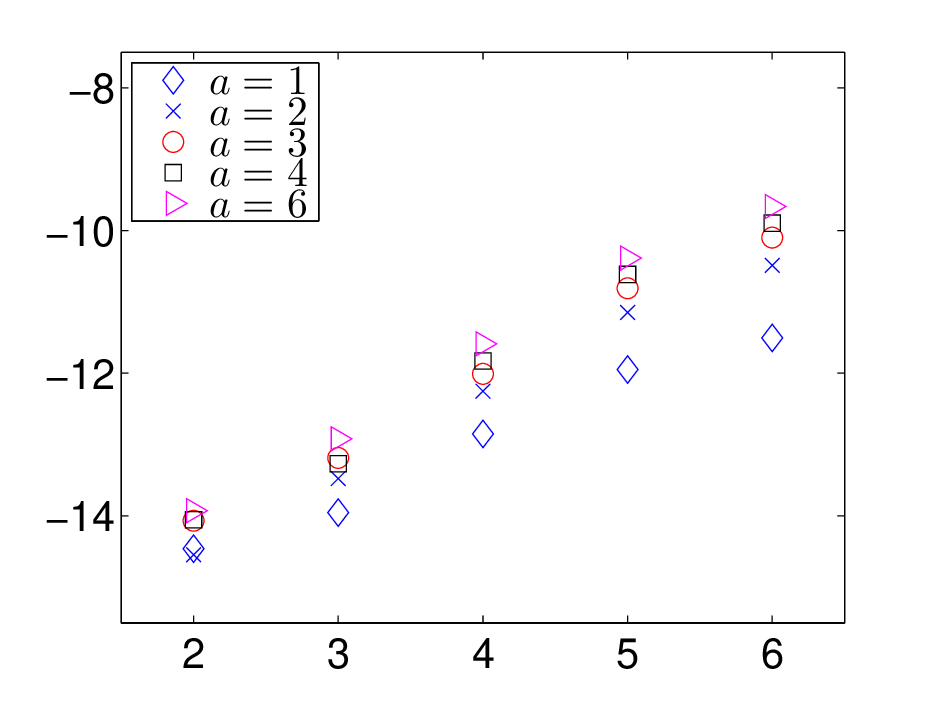}
\label{fig:519a_y}
}
\caption{ 
Relative errors of $X_1,Y_1$, on a logarithmic scale, as a function
of $\log_{10}(c)$, for $a=1,2,3,4,6$. Corresponds to Experiment 1.
}
\label{fig:519a}
\end{figure}
%%%%%%%%%%%%%%%%%%%%%%%%%

\subsection{Experiment 2.}
\label{sec_mat_exp2}

In Experiment 1, we took a rather detailed look at relative errors
to which the first coordinate of an eigenvector of certain
tridiagonal matrices is evaluated.
The purpose of this section is to illustrate the analysis
of Section~\ref{sec_analytical} in a more qualitative way.

To that end, we carry out the experiment described in 
Section~\ref{sec_mat_exp1} with the following parameters:
$a=2$, $c=1000$, $n=1497$, $\delta=50$
(see Table~\ref{t:test513a2_delta50}).
We obtain the four unit-length vectors $X,Y,\hat{X},\hat{Y}$ in $\Rc^n$,
as described in Section~\ref{sec_mat_exp1}.

{\bf Figures.}
We display the results of this experiment in 
Figures~\ref{fig:513_x}--\ref{fig:513_erry}.
In each figure, the abscissa corresponds to the indices of
the eigenvector, i.e. $1 \leq j \leq n$;
thus, we plot certain functions of the indices of the eigenvector.

In Figure~\ref{fig:513_x},
we plot the coordinates $X_j$ of $X$, on the 
linear scale (left) and on the logarithmic scale (right).

In Figure~\ref{fig:513_errx}, 
we plot the relative (left) and absolute (right) errors of $X_j$
on the logarithmic scale.

In Figure~\ref{fig:513_erry}, 
we plot the relative (left) and absolute (right) errors of $Y_j$
on the logarithmic scale.

{\bf Observations.} 
Several observations can be made
from Figures~\ref{fig:513_x}--\ref{fig:513_erry}.

The following three observations pertain to the
behavior of the coordinates of $X$ (see Figure~\ref{fig:513_x}).

{\bf Observation 1.} In the beginning,
the coordinates of $X$ 
grow rapidly from $\approx 10^{-26}$ to $\approx 10^{-1}$ up
to the index $k$ such that $\lambda \approx A_k+2$
(in agreement with Theorem~\ref{thm_vec_grow}
in Section~\ref{sec_vectors}). 
We refer to the corresponding indices as the "region of growth".

{\bf Observation 2.}
At the other end,
they decay rapidly (while changing signs) 
from $\approx 0.05$ to $\approx 10^{-14}$, starting from
the index $n-q$ such that $\lambda \approx A_{n-q}-2$
(in agreement with Theorem~\ref{thm_vec_decay}
in Section~\ref{sec_vectors}).
We refer to the corresponding indices as the "region of decay".

{\bf Observation 3.} In the middle (i.e. for indices $j$
such that $\lambda-2 \leq A_j \leq \lambda+2$), the coordinates
behave in an "oscillatory way" (see e.g. Figure~\ref{fig:513_x}).
Such behavior is expected 
from Theorems~\ref{thm_mat}, \ref{thm_delta} and 
Corollaries~\ref{cor_mat}, \ref{cor_delta}
in Section~\ref{sec_vectors}.
We refer to the corresponding indices as the "oscillatory region"
(see also \cite{Report4} for an alternative approach to the
evaluation of $X_j$ in the oscillatory region that,
{\it inter alia}, further
justifies this term).

The following observations pertain to the behavior
of relative and absolute errors to which the coordinates
of the eigenvector are evaluated, by either Inverse Power
or the algorithm from Section~\ref{sec_short}.

{\bf Observation 4.} Qualitatively, the behavior of relative
errors of $X_j$ is similar to that of $Y_j$ and depends
of whether $j$ is in the region of growth, in the region of decay,
or in the oscillatory region.

{\bf Observation 5.} In the region of growth, the relative errors
of $X_j$ change monotonically with $j$ and always stays
"small" (below $10^{-12}$), in agreement with
Theorems~\ref{thm_err_grow_rel}, \ref{thm_summary_left},
Corollary~\ref{cor_glue} in Section~\ref{sec_about_errors}
and Theorem~\ref{thm_ac} in Section~\ref{sec_asym}. 
In the region of decay, the relative
errors of $X_j$ display a similar behavior,
in agreement with Theorem~\ref{thm_summary_right},
Corrolary~\ref{cor_glue}
in Section~\ref{sec_about_errors},
and Theorem~\ref{thm_ac} in Section~\ref{sec_asym}.
In particular, both in the regions of growth and 
in the region of decay
the relative errors of $X_j$ essentially do not depend on 
the magnitude of $X_j$.

{\bf Observation 6.} In the oscillatory region, the relative errors
of $X_j$ oscillate between $10^{-16}$ and $10^{-10}$.
On the other hand, the {\it absolute} errors of $X_j$ always stay below
roughly $10^{-14}$. In other words, the relative errors of $X_j$
in the oscillatory region depend on the magnitude of $X_j$,
in agreement with Theorems~\ref{thm_osc}, \ref{thm_summary_left}
in Section~\ref{sec_about_errors}.

\subsection{Experiment 3.}
\label{sec_mat_exp3}

In this experiment, we illustrate the numerical algorithms
of Section~\ref{sec_num_algo} via evaluation of Bessel functions
(see Sections~\ref{sec_prel_bessel},
\ref{sec_num_bessel}, \ref{sec_mat_bessel}).

{\bf Description.}
We first choose, more or less arbitrarily,
the real number $\delta \geq 0$. Then, for each choice
of five different values $c=10^2,10^3,10^4,10^5,10^6$,
we do the following. We define the integer 
$m=m(\delta,c)$ via the formula
\begin{align}
m = c + \delta^{2/3} \cdot c^{1/3}
\label{eq_exp3_01}
\end{align}
(see \eqref{eq_aa_02} in Theorem~\ref{thm_aa} and
\eqref{eq_aa_33} in Theorem~\ref{thm_aaa}), 
select the integer $N>m$ (according to Remark~\ref{rem_bessel_n} 
in Section~\ref{sec_num_bessel}),
define the integer $n$ via the formula
\begin{align}
n = 2 \cdot N + 1,
\label{eq_exp3_02}
\end{align}
define $A_1,\dots,A_n$ via \eqref{eq_ac_2} with $a=1$ in Theorem~\ref{thm_ac},
and then define
the symmetric tridiagonal $n \times n$ matrix $A=A(c)$ via
\eqref{eq_running}. Then, we define the real number $\lambda(c)$
via the formula
\begin{align}
\lambda(c) = 2 + \frac{n+1}{c}.
\label{eq_exp3_03}
\end{align}
(We observe that $\lambda(c)$ is an eigenvalue of $A$,
according to \eqref{eq_bessel_num_lambda} in Section~\ref{sec_num_bessel}.)

Next, we obtain the unit-length $\lambda(c)$-eigenvector of $A$ by four
different methods:

{\bf 1.} $Y=(Y_N,\dots,Y_0, \dots, Y_{-N})$ via 30 iterations of
Shifted Inverse Power, in double precision (observe that the indices vary
between $N$ and $-N$, as in \eqref{eq_bessel_num_vec}).

{\bf 2.} $X=(X_N,\dots,X_0, \dots X_{-N})$ 
via the algorithm from Section~\ref{sec_short},
in double precision.

{\bf 3.} $\hat{Y}=(\hat{Y}_N,\dots,\hat{Y}_0,\dots,\hat{Y}_{-N})$ 
via 30 iterations of Shifted Inverse Power, in extended precision.

{\bf 4.} $\hat{X}=(\hat{X}_N,\dots,\hat{X}_0,\dots,\hat{X}_{-N})$ 
via the algorithm from Section~\ref{sec_short},
in extended precision.

The experiment is conducted for each pair of values $\delta,c$, where
$\delta = 50,100,150$ and
$c=10^2,10^3,10^4,10^5,10^6$.
In each case,
we verify that each of $\hat{X}$ and $\hat{Y}$ satisfies the definition of an
eigenvector coordinate-wise 
to at least 17 decimal digits, 
and also that $\hat{X}=\hat{Y}$ to at least 17 decimal digits.
In other words, each of $\hat{X}, \hat{Y}$ is the unit-length
$\lambda(c)-$eigenvector of $A$ defined to full double precision.
Also, we verify that
the middle $2 \cdot m+1$ coordinates of both $\hat{X}$ and $\hat{Y}$
are equal to $J_m(c),\dots,J_0(c),\dots,J_{-m}(c)$ to at least
17 decimal digits (see Remark~\ref{rem_bessel_n}
in Section~\ref{sec_num_bessel}).
We use these observations to compute the accuracy to which
the coordinates $X_m,\dots,X_0$ of $X$ and $Y_m,\dots,Y_0$ of $Y$ approximate
$J_m(c),\dots,J_0(c)$.

The results of the experiment are displayed in 
Tables~\ref{t:test515_delta50}--\ref{t:test515_delta150}.
Each of these tables corresponds to a particular choice
$\delta$ in \eqref{eq_exp3_01}, and has the following structure.
Each of five columns corresponds to a different value of $c$,
between $10^2$ and $10^6$. The first three rows contain
$c$, the integer $m$ defined via \eqref{eq_exp3_01},
and the integer $N>m$ (see Remark~\ref{rem_bessel_n}
in Section~\ref{sec_num_bessel}). 
The next two rows contain
$J_m(c)$ and $J_c(c)$.
The last two rows contain the relative accuracy to which $X_m$ and $Y_m$,
respectively, approximate $J_m(c)$.

{\bf Observations.} Several observations can be made from
Tables~\ref{t:test515_delta50}--\ref{t:test515_delta150}.

{\bf Observation 1.} 
For every choice of parameters in Experiment 3, 
$J_m(c)$ is fairly small compared to $J_c(c)$,
as predicted by Theorem~\ref{thm_vec_grow}
and Corollary~\ref{cor_vec_grow} in Section~\ref{sec_vectors}
(for all $c$,
$J_c(c) \leq 10^{-20}, 10^{-40}, 10^{-59}$ for
$\delta = 50,100,150$, respectively).
Despite this fact,
both $X_m$ and $Y_m$
approximate $J_m(c)$ to 
a fairly high {\it relative} accuracy,
in all cases. Moreover, for any $c$, this accuracy
seems to be independent of the magnitude of $J_m(c)$
(compare to \eqref{eq_ac_6} of Theorem~\ref{thm_ac};
 see also Conjecture~\ref{conj_exp1}).

{\bf Observation 4.} On the other hand, the relative accuracy of both $X_1$ and
$Y_1$ does depend on $c$ (as Theorem~\ref{thm_ac} 
in Section~\ref{sec_asym} suggests).
In particular, for any fixed $a$, the relative error of $Y_1$ seems to 
be roughly proportional to $c^{0.8}$, e.g.
\begin{align}
\text{\rm{rel}}(Y_1) = O(c^{0.8}) \cdot \varepsilon,
\label{eq_exp3_10}
\end{align}
where $\varepsilon$ is the machine precision (see second column
in Table~\ref{t:test519}). Also, the relative error of $X_1$
seems to be roughly proportional to $c^{2/3}$ 
(see Table~\ref{t:test519}),
in agreement with 
Conjecture~\ref{conj_exp1} above
(compare to \eqref{eq_ac_6} of Theorem~\ref{thm_ac}).

%
% a: coordinates on the linear scale
% b: coordinates on the logarithmic scale
% c: relative error, algo
% d: absolute error, algo
% e: relative error, IP
% f: absolute error, IP
%

%%%%%%%%%%%%%%%%%%%%%%%%%%%
\clearpage
%%%%%%%%%%%%%%%%%%
% c=100
%%%%%%%%%%%%%%%%%%
\begin{figure}[ht]
\centering
\subfigure[coordinates: linear and logarithmic scales]{
\includegraphics[width=6cm, bb=81 217 529 564, clip=true]
{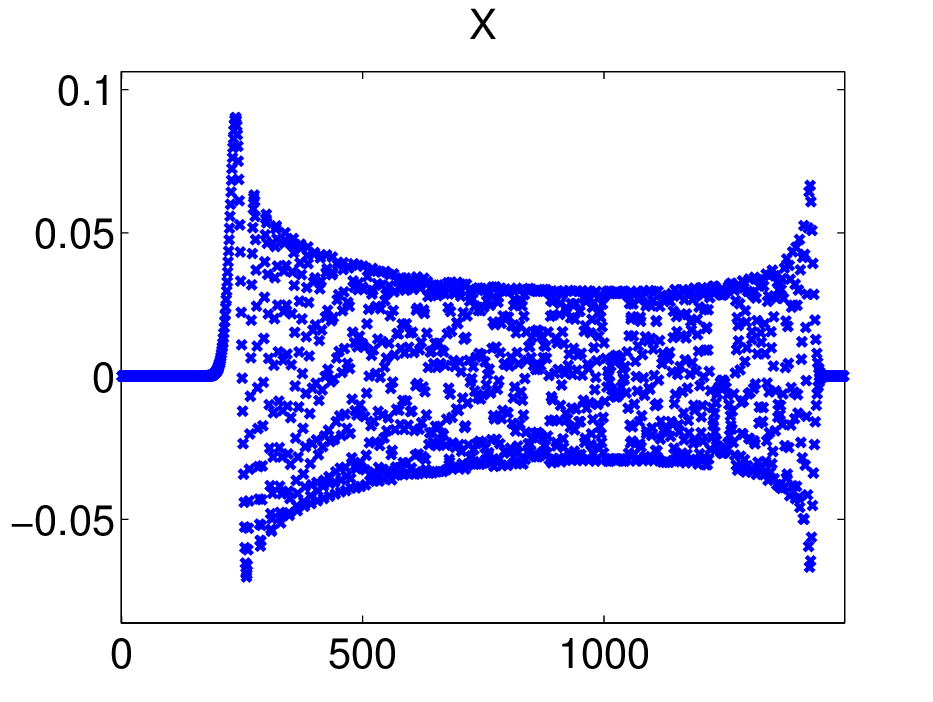}
\includegraphics[width=6cm, bb=81 217 529 564, clip=true]
{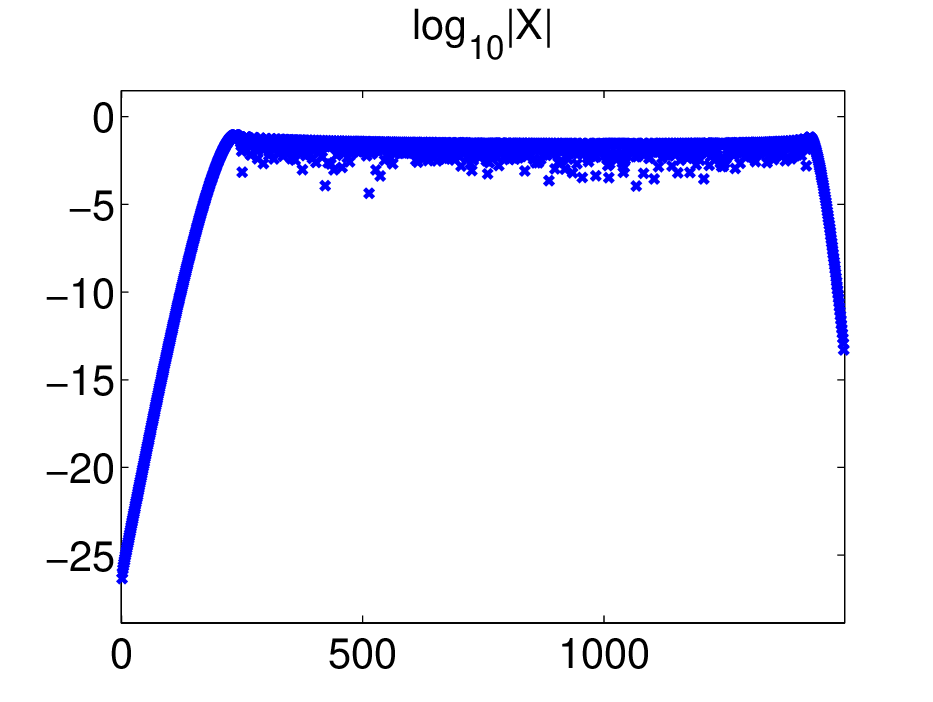}
\label{fig:513_x}
}
\subfigure[principal algorithm: relative and absolute errors]{
\includegraphics[width=6cm, bb=81 217 529 564, clip=true]
{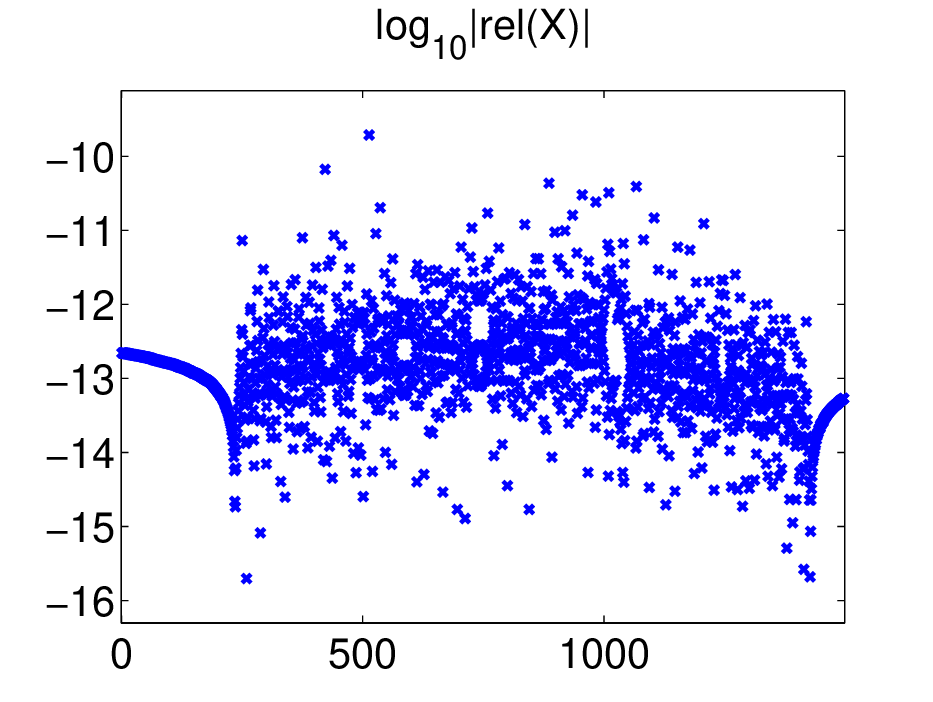}
\includegraphics[width=6cm, bb=81 217 529 564, clip=true]
{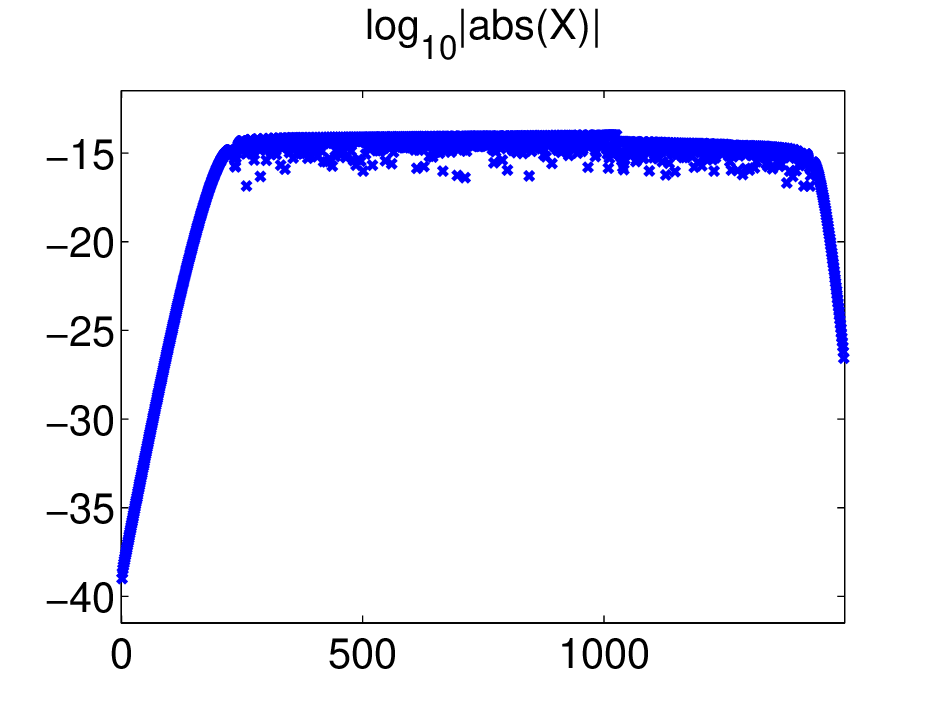}
\label{fig:513_errx}
}
\subfigure[inverse power: relative and absolute errors]{
\includegraphics[width=6cm, bb=81 217 529 564, clip=true]
{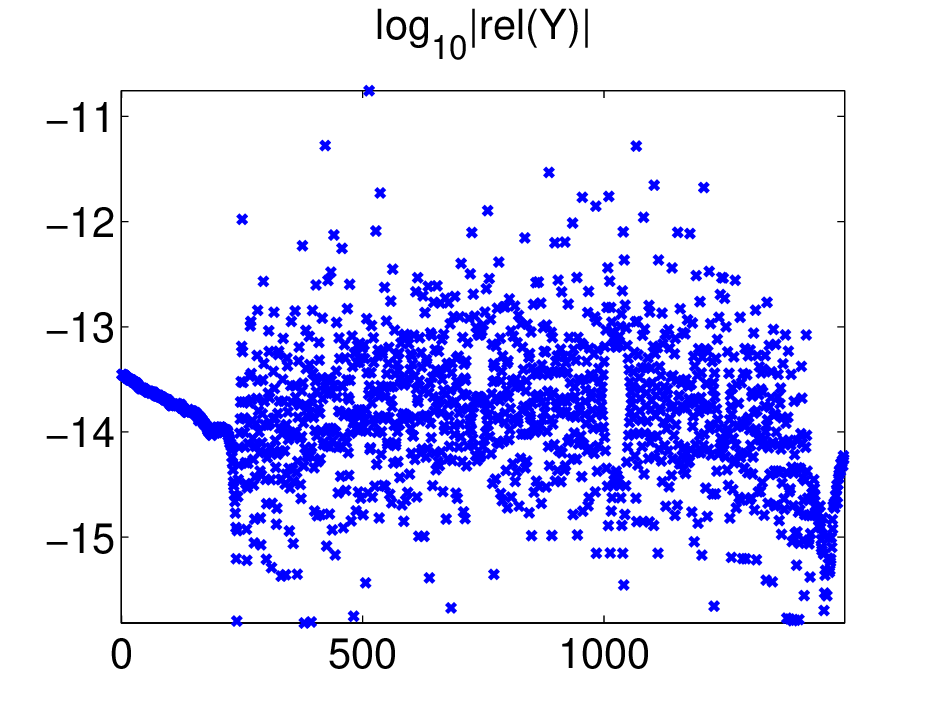}
\includegraphics[width=6cm, bb=81 217 529 564, clip=true]
{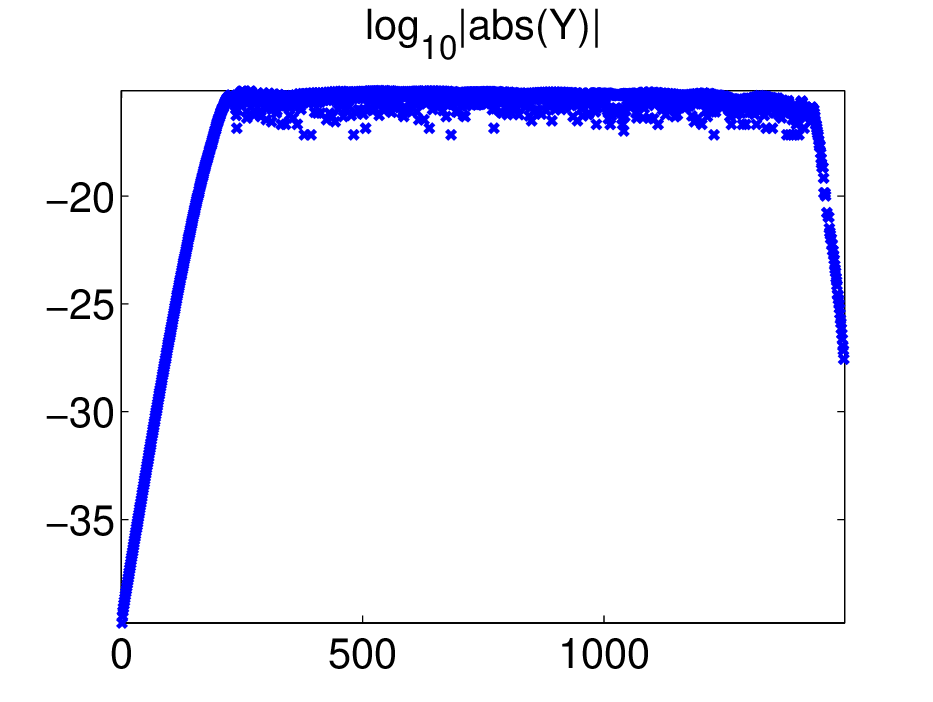}
\label{fig:513_erry}
}
\caption{ 
The coordinates of $X$ (principal algorithm)
and $Y$ (30 iterations
of Inverse Power).
Parameters: $c=1000$,
$n=1500$, 
$\lambda=\mbox{\text{\rm{0.41022E+01}}}$, 
$k=226$, $q=65$. Corresponds to Experiment 2.
% $\lambda=\mbox{\text{\rm{0.516647884E+01}}}$. 
}
\label{fig:513}
\end{figure}

%%%%%%%%%%%%%%%%%%%%%%%%%%%

\end{document}